\newtheorem{theorem}{Theorem}[section]
\newtheorem{lemma}[theorem]{Lemma}
\newtheorem{corollary}[theorem]{Corollary}
\newtheorem{hypothesis}[theorem]{Hypothesis}
\theoremstyle{definition}
\newtheorem{definition}[theorem]{Definition}
\newtheorem{example}[theorem]{Example}
\theoremstyle{remark}
\newtheorem{remark}[theorem]{Remark}
\numberwithin{equation}{section}
\def\cE{{\mathcal{E}}}
\def\cF{{\mathcal{F}}}
\def\cB{{\mathcal{B}}}
\def\cG{{\mathcal{G}}}
\def\cC{{\mathcal{C}}}
\def\cS{{\mathcal{S}}}
\def\cM{\mathcal{M}}
\def\sE{{\mathscr{E}}}
\def\sF{\mathscr{F}}
\def\bP{\mathbf{P}}
\def\bR{\mathbb{R}}
\def\bG{\mathbb{G}}
\def\bQ{\mathbb{Q}}
\def\cN{\mathcal{N}}
\def\cY{\mathcal{Y}}
\def\cW{\mathcal{W}}
\def\cU{\mathcal{U}}
\def\cR{\mathcal{R}}
\def\cH{\mathcal{H}}
\def\fm{\mathfrak{m}}
\begin{document}

\title[Restriction of right process]{On the restriction of a right process outside a negligible set}

%    Information for first author
\author{ Liping Li}
%    Address of record for the research reported here
\address{Fudan University, Shanghai, China.  }
\address{Bielefeld University,  Bielefeld, Germany.}
%    Current address
%\curraddr{Department of Mathematics and Statistics,
%Case Western Reserve University, Cleveland, Ohio 43403}
\email{liliping@fudan.edu.cn}
%    \thanks will become a 1st page footnote.
\thanks{The first named author is a member of LMNS,  Fudan University.  He is also partially supported by NSFC (No.  11931004) and Alexander von Humboldt Foundation in Germany. }

%    Information for second author
\author{Michael R\"ockner}
\address{Fakult\"at f\"ur Mathematik, Universit\"at Bielefeld,  Postfach 100 131, D-33501 Bielefeld, Germany, and Academy for Mathematics and Systems Science,  CAS,  Beijing}
\email{roeckner@mathematik.unibielefeld.de}
%\thanks{Support information for the second author.}

%    General info
%\subjclass[2000]{Primary 37K55, 37J40; Secondary 35B35, 35Q35}

%\date{January 1, 2001 and, in revised form, June 22, 2001.}

%\dedicatory{This paper is dedicated to our advisors.}

\keywords{Right processes,  Quasi-polar sets,  Quasi-absorbing sets,  Quasi-regular (semi-)Dirichlet forms,  Restrictions}

\begin{abstract}
%The goal of this paper is to examine the restriction of a right process on a Radon topological space outside a negligible set and to investigate whether the restricted object can induce a good Markov process.  We answer this question at three different levels: the induction process only requires right continuity, the induction process is right process, and the semi-Dirichlet form of the induction process is quasi-regular. The main results characterize the negligible set that meets the requirements in the universally measurable framework. These characterizations can be used to construct examples of non-right Markov processes or non-quasi-regular (semi-)Dirichlet forms.  Particularly,  a non-tight,  strong Feller, symmetric right process on a non-Lusin Radon topological space,  whose Dirichlet form is not quasi-regular,  will be raised. 

The objective of this paper is to examine the restriction of a right process on a Radon topological space, excluding a negligible set, and investigate whether the restricted object can induce a Markov process with desirable properties. We address this question in three aspects: the induced process necessitates only right continuity; it is a right process, and the semi-Dirichlet form of the induced process is quasi-regular. The main findings characterize the negligible set that meets the requirements within a universally measurable framework. These characterizations can be employed to generate instances of Markov processes that are non-right or (semi-)Dirichlet forms that are non-quasi-regular. Specifically, we will construct an example of a non-tight, strong Feller, symmetric right process on a non-Lusin Radon topological space, whose Dirichlet form is not quasi-regular.
\end{abstract}

\maketitle

 \tableofcontents

\section{Introduction}

%In a recent paper \cite{BCR22}, Beznea et al. considered a frequently asked question: whether a Markov semigroup $(P_t)$ on a Polish space $E$ that satisfies strong Feller property can always induce a good Markov process. Here, the strong Feller property means that $P_t(b\cE)\subset bC(E)$, where $b\cE$ is the family of all bounded Borel measurable functions on $E$, and $bC(E)$ is the family of all bounded continuous functions on $E$,  and a good Markov process refers to a (simple) Markov process whose trajectories have good regularity, such as right continuity,  c\`adl\`ag property, strong Markov property,  etc.  The answer is certainly negative.  Beznea et al. in \cite{BCR22} constructed an interesting class of counterexamples using the restriction method: starting from a good Markov process that satisfies strong Feller property,  they restrict it outside a suitable non-polar set. This method always maintains the strong Feller property, but the restricted semigroup cannot induce another good (e.g.,  c\`adl\`ag) Markov process.

In their recent paper \cite{BCR22}, Beznea et al. addressed a frequently posed question: Can a Markov semigroup $(P_t)$ on a Polish space $E$ with the strong Feller property always give rise to a desirable Markov process? Here, the strong Feller property is defined as $P_t(b\cE)\subset bC(E)$, where $b\cE$ represents the collection of all bounded Borel measurable functions on $E$, and $bC(E)$ denotes the family of all bounded continuous functions on $E$. A desirable Markov process refers to a (simple) Markov process exhibiting regular trajectory properties, including right continuity, c\`adl\`ag property, strong Markov property, and more. However, the answer to this question is unequivocally negative. In their work \cite{BCR22}, Beznea et al. devised a compelling set of counterexamples using the restriction method. They initiated with a good Markov process that satisfies the strong Feller property, and subsequently restricted it outside a suitable non-polar set. While this method consistently preserves the strong Feller property, the resulting restricted semigroup cannot induce another desirable (e.g., c\`adl\`ag) Markov process.

The objective of this article is to investigate the impact of restricting a general Markov process outside a negligible set and determine whether the resulting restricted object can still induce a good Markov process. The class of general Markov processes under consideration here refers to \emph{right processes} on a Radon space, which are derived from Meyer's well-known \emph{hypoth\`eses droites} (HD1) and (HD2). According to (HD1), the semigroup $(P_t)$ possesses a right-continuous realization, while (HD2) enables the strong Markov property.  We refer readers to \cite{Ge75, Sh88}, as well as Appendix~\ref{APPA}, for a detailed explanation of these fundamental assumptions.  In the context of this study,  a negligible set refers to a (weakly) $U$-negligible set $N\in \cE^u$, which satisfies $U^\alpha 1_N=0$ outside $N$, where $(U^\alpha)_{\alpha>0}$ represents the resolvent of $(P_t)$, and $\cE^u$ denotes the universally measurable $\sigma$-algebra on $E$. This condition ensures that the restriction of $(U^\alpha)$ outside $N$ remains a family of kernels that satisfy the resolvent equation. We will examine this problem from three distinct levels:
\begin{itemize}
\item[(1)] If $(P_t)$ satisfies only (HD1), can the restricted resolvent induce a transition function that satisfies (HD1)?
\item[(2)] If $(P_t)$ satisfies both (HD1) and (HD2), can the restricted resolvent induce a transition function that satisfies both (HD1) and (HD2)?
\item[(3)] If,  in addition, $(P_t)$ satisfies the \emph{sector condition} and \emph{tightness} with respect to a certain $\sigma$-finite measure, which corresponds to a \emph{quasi-regular semi-Dirichlet form}, is the semi-Dirichlet form restricted outside a set of zero measure still quasi-regular? What is the relationship between this restriction and the restriction of the resolvent?
\end{itemize}

Before addressing these questions, we need to clarify some information regarding topology and measurability. When discussing the restriction of a topological space, we usually use the subspace topology. This is crucial in the examination of the first and third levels. However, the second level is an exception. For right processes, the original topology on the state space only appears in the definition of (HD1), and it is not the intrinsic topology of the right process (the \emph{fine topology} is). As observed by Beznea et al. (see \cite{BCR20}), assigning any so-called \emph{natural topology} to the state space, i.e., topologies coarser than the fine topology, the (Borel) right process is still a (Borel) right process. Therefore, we can also relax the topological restriction when studying the second level problem. Regarding measurability, the universally measurable $\sigma$-algebra has a more complex structure than the Borel $\sigma$-algebra generated by topology. The definition of Radon space, as well as (HD1), is based on this. The importance of universal measurability lies in the fact that most Markov process transformations, such as \emph{killing}, \emph{time change}, and \emph{$h$-transformation}, do not preserve Borel measurability, but do preserve universal measurability. However, the setting of universal measurability also brings some difficulties to the above problem. One of the most prominent difficulties is that we can no longer use the \emph{first hitting/entrance time} to define certain universally measurable exceptional sets because these random times may not be stopping times in general. To overcome this difficulty, we borrow the concepts of \emph{quasi-absorbing} and \emph{quasi-polar} sets from \cite{Sh88} (see Definition~\ref{DEF21}) using the outer measure. This approach, which only requires universal measurability, serves as a generalization of \emph{absorbing} and \emph{polar} sets in the Borel measurable sense. It provides a solution for defining \emph{small} sets in the universally measurable context.

All three levels of the problem will be thoroughly addressed in this article. For the first two levels, our results in Theorems \ref{THM24} and \ref{THM31} establish that the stability holds if and only if the complement of $N$ is quasi-absorbing. This type of restriction can be likened to a Markov chain being confined to one of its irreducible components. Specifically, if $N$ satisfies a slightly stronger condition, namely $U^\alpha 1_N = 0$ on $E$ (not just outside $N$), then a similar characterization reveals that $N$ can only be quasi-polar sets; refer to Theorem~\ref{THM32} for details.
The characterization of the problem at the third level necessitates that $N$ is contained in the complement of a (quasi-)absorbing set that is Borel measurable, and it ensures the consistency of the restrictions on the Dirichlet form and resolvent. At this level, the enhancement of measurability is primarily attributed to the quasi-regularity assumption, which guarantees that the corresponding right process is essentially a Borel right process on a smaller Lusin space. It is important to emphasize that the characterizations of the first and third levels require the assumption that the restricted state space is equipped with the subspace topology. Without this assumption, similar characterizations may not hold, as demonstrated by the two counterexamples provided in Example \ref{EXA46}.

The aforementioned characterizations can be utilized to construct examples of non-right Markov processes or non-quasi-regular (semi-)Dirichlet forms. Beznea et al. presented a simple example in \cite{BCR22}: Consider the restriction of one-dimensional Brownian motion to $\mathbb{R}\setminus\{0\}$ ($\{0\}$ is a $U$-negligible but non-polar set). The restricted semigroup still satisfies the strong Feller property but no longer induces a right-continuous Markov process. Furthermore, the Dirichlet form of Brownian motion restricted to $\mathbb{R}\setminus\{0\}$ is not quasi-regular. In the framework of universal measurability, we can provide even more intriguing examples. Particularly, based on the construction by Salisbury \cite{Sa87}, we can obtain a non-Borel quasi-polar set $Z'\subset \mathbb{R}^n$ ($n\geq 4$) for Brownian motion (refer to Examples~\ref{EXA34} and \ref{EXA45}) and a right process on $\mathbb{R}^n\setminus Z'$, corresponding to the restriction of the Brownian resolvent to $\mathbb{R}^n\setminus Z'$, which is strong Feller, symmetric, but not tight. Specifically, its Dirichlet form is not quasi-regular.

\section{Restriction outside a negligible set under (HD1)}

Let $E$ be a Radon topological space, meaning it is homeomorphic to a universally measurable subset of a compact metric space. In this article, we will use the notations defined in Appendix~\ref{APPA}. Specifically, $\cE$ (resp. $\cE^u$) denotes the Borel $\sigma$-algebra (universally measurable $\sigma$-algebra) on $E$. Let $(P_t)_{t\geq 0}$ be a normal Markov transition function on $(E,\cE^u)$ (refer to Definition~\ref{DEFA1}), and its resolvent is given by
\[
	U^\alpha f=\int_0^\infty e^{-\alpha t}P_t f dt,\quad \alpha\geq 0,  f\in p\cE^u.  
\]
Assume that $(P_t)$ satisfies the first of Meyer's \emph{hypothèses droites} (HD1) as defined in Definition~\ref{DEFA2}. Let us consider the collection
\begin{equation}\label{eq:21}
X=(\Omega, \cF^u, \cF^u_t, X_t, \theta_t, \bP^x),
\end{equation}
which represents the canonical realization of $(P_t)$. In short, $\Omega$ is the family of all right continuous maps from $\bR^+=[0,\infty)$ to $E$, $(X_t)$ is the coordinate process, $(\cF^u, \cF^u_t)$ is the (unaugmented) natural filtration on $\Omega$, and $(X_t)$ satisfies the (simple) Markov property \eqref{eq:A2} with transition semigroup $(P_t)$ under each $\bP^x$. For any finite positive measure $\mu$ on $E$, we set $\bP^\mu:=\int_E \mu(dx)\bP^x$. It is important to note that in this section, $X$ does not necessarily possess the strong Markov property. Generally, a ceremony and a lifetime should be included in the collection \eqref{eq:21}; however, for simplicity, we have omitted them. For further explanation, please refer to \S\ref{APPA8} in the Appendix.

Our objective is to remove a specific negligible subset $N$ from $E$ and investigate whether the restriction of $U^\alpha$ (or $P_t$) to $E\setminus N$ remains associated with a well-behaved Markov process. However, before delving into this, we need to introduce some terminologies related to small sets.

\begin{definition}\label{DEF21}
Let $X$ and $U^\alpha$ be given above.  Let $N\in \cE^u$.  
\begin{itemize}
\item[(1)] $N$ is called \emph{$U$-negligible},  if $U^\alpha 1_N(x)=0$ for one (equivalently,  all) $\alpha>0$ and any $x\in E$;
\item[(2)] $N$ is called \emph{weakly $U$-negligible},  if $U^\alpha 1_N(x)=0$ for one (equivalently,  all) $\alpha>0$ and any $x\in E\setminus N$;
\item[(3)] $E\setminus N$ is called \emph{quasi-absorbing},  if for any initial law $\mu$ carried by $E\setminus N$,  $\{X_t\in E\setminus N, \forall t\geq 0\}$ has full $\bP^\mu$-outer measure; 
\item[(4)] $N$ is called \emph{quasi-polar},  if for every initial law on $E$,  the $\bP^\mu$-outer measure of $\{X_t\notin N:\forall t>0\}$ is equal to $1$.  
\end{itemize}
\end{definition}
\begin{remark}\label{RM22}
\begin{itemize} 
\item[(1)] Here we utilize outer measures because neither $\{X_t\in E\setminus N, \forall t\geq 0\}$ nor $\{X_t\notin N:\forall t>0\}$ necessarily belongs to the augmentation $\cF$ of $\cF^u$ since $N$ is only universally measurable. If the measurability of $N$ is sufficiently good (e.g., $N\in \cE$ or $N$ is \emph{nearly optional} when $X$ further satisfies (HD2)), such that its first hitting/entrance time is $\cF_{t+}$-stopping times ($\cF_t$ represents the augmentation of $\cF^u_t$; see Appendix~\ref{APPA5}), then a quasi-absorbing or quasi-polar set is commonly referred to as an \emph{absorbing} or \emph{polar} set.
\item[(2)]  It is evident that $U$-negligible sets are also weakly $U$-negligible, and the complement of a quasi-polar set is quasi-absorbing. However, the converse statements are not necessarily true. To illustrate this, consider the following example: Let $E=[0,1]\cup \{2\}$, and let $P_t$ be composed of two parts on $E$: Its restriction to $[0,1]$ represents the transition semigroup of reflecting Brownian motion on $[0,1]$, while $P_t(2,\cdot):=\delta_2(\cdot)$ denotes a Dirac measure at $2$. In this case, $\{2\}$ is weakly $U$-negligible and $[0,1]$ is quasi-absorbing. However, $\{2\}$ is neither $U$-negligible nor quasi-polar.
\item[(3)] It is worth noting that the complement of a quasi-absorbing set is weakly $U$-negligible, and a quasi-polar set is $U$-negligible. Conversely, the contraries are not true either. To see this, consider the case where $E\setminus N$ is a quasi-absorbing set. Since ${X_s\in E\setminus N: \forall s\geq 0}\subset {X_t\in E\setminus N}\in \cF^u$ for any fixed $t\geq 0$, it follows that $P_t(x, E\setminus N)=\bP^x(X_t\in E\setminus N)=1$ for any $t\geq 0$ and $x\in E\setminus N$. Hence, $U^\alpha 1_N(x)=0$ for any $x\in E\setminus N$, implying that $N$ is weakly $U$-negligible. Similarly, one can deduce that a quasi-polar set is $U$-negligible.
\item[(4)] In the realm of universal measurability,  the concepts of these sets exhibit a higher level of complexity compared to their counterparts in the Borel measurable sense. To illustrate this, Salisbury \cite{Sa87} constructed a non-Borel universally measurable set $Z\subset \mathbb{R}^n$ ($n\geq 2$) for Brownian motion. Remarkably, this set has the property that all non-constant paths intersect $Z$, indicating that $Z$ is not a quasi-polar set. However, it is noteworthy that every Borel subset of $Z$ is polar. Additionally, Salisbury provided an example of a non-Borel quasi-polar set $Z'$ for $n\geq 4$, such that any Borel set containing $Z'$ is not polar. Further details and explanations can be found in Example~\ref{EXA34}.
\end{itemize}
\end{remark}

For convenience, we use the notation $\bP^\mu_*$ to represent the outer measure of $\bP^\mu$. This function is defined on the entire collection of subsets of $\Omega$ and takes non-negative values. It is important to highlight that the families of quasi-absorbing sets and quasi-polar sets remain unchanged regardless of whether we consider the filtration in \eqref{eq:21} or its augmented version $(\cF,\cF_t)$.

\begin{lemma}\label{LM23}
Let $\mu$ be an initial law on $E$ and $\tilde{\Omega}\subset \Omega$.  Then $\tilde{\Omega}$ has full $(\Omega, \cF^u,\bP^\mu)$-outer measure,  if and only if it has full $(\Omega, \cF,\bP^\mu)$-outer measure.  
\end{lemma}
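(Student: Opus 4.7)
The plan is to reformulate ``full outer measure'' as ``the complement has outer measure zero'' and then use the standard sandwich characterization of the $\bP^\mu$-augmentation. Concretely, $\tilde{\Omega}$ has full $(\Omega,\cG,\bP^\mu)$-outer measure if and only if for every $\varepsilon>0$ there exists $B\in\cG$ with $\tilde\Omega^c\subset B$ and $\bP^\mu(B)<\varepsilon$; I would apply this separately for $\cG=\cF^u$ and $\cG=\cF$.

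First I would do the easy direction. Since $\cF^u\subset\cF$ and $\bP^\mu$ restricted to $\cF^u$ agrees with the underlying measure, every $\cF^u$-cover of $\tilde\Omega^c$ is also an $\cF$-cover with the same probability. Consequently, the infimum defining the $\cF$-outer measure is at most the infimum defining the $\cF^u$-outer measure, so $\cF^u$-full outer measure forces $\cF$-full outer measure.

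For the converse I would invoke the structure of the augmentation. By definition, $\cF$ is obtained from $\cF^u$ by adjoining $\bP^\nu$-null sets (for all admissible laws $\nu$, or at least for $\nu=\mu$), so $\cF$ is contained in the $\bP^\mu$-completion of $\cF^u$. The classical sandwich lemma then provides, for any $C\in\cF$, sets $B_1,B_2\in\cF^u$ with $B_1\subset C\subset B_2$ and $\bP^\mu(B_2\setminus B_1)=0$, whence $\bP^\mu(B_2)=\bP^\mu(C)$. Applied to a cover $C\in\cF$ of $\tilde\Omega^c$ with $\bP^\mu(C)<\varepsilon$, this yields $B_2\in\cF^u$ with $\tilde\Omega^c\subset B_2$ and $\bP^\mu(B_2)<\varepsilon$, proving that $\cF$-full outer measure implies $\cF^u$-full outer measure.

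The proof is really bookkeeping, and the only mild subtlety is making sure the notion of ``augmentation'' used in \eqref{eq:21} and in the Appendix is the one by $\bP^\mu$-null sets (or an intersection of such over various initial laws, which is still contained in the $\bP^\mu$-completion); after that, the sandwich characterization does all the work. I expect this to be the only genuine point to check, since everything else reduces to elementary monotonicity of outer measure under enlargement of the $\sigma$-algebra.
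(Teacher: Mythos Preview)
Your reformulation ``$\tilde\Omega$ has full outer measure if and only if $\tilde\Omega^c$ has outer measure zero'' is false in general: in a probability space there can exist subsets $A$ with both $A$ and $A^c$ having outer measure $1$ (e.g.\ a Bernstein set in $[0,1]$ under Lebesgue measure). The correct duality is that $\tilde\Omega$ has full outer measure if and only if $\tilde\Omega^c$ has \emph{inner} measure zero, i.e.\ every measurable subset of $\tilde\Omega^c$ is null. This error causes you to swap the two directions: enlarging the $\sigma$-algebra from $\cF^u$ to $\cF$ \emph{decreases} outer measure (more covers are available), so $\cF$-full outer measure trivially implies $\cF^u$-full outer measure, not the other way around; the substantive direction is $\cF^u$-full $\Rightarrow$ $\cF$-full.

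Your sandwich-lemma idea is exactly the right tool, but it should be applied to covers of $\tilde\Omega$ itself rather than of $\tilde\Omega^c$. Given $B\in\cF$ with $\tilde\Omega\subset B$, the sandwich $B_1\subset B\subset B_2$ with $B_1,B_2\in\cF^u$ and $\bP^\mu(B_2\setminus B_1)=0$ yields $\tilde\Omega\subset B_2\in\cF^u$ with $\bP^\mu(B_2)=\bP^\mu(B)$; the $\cF^u$-full hypothesis forces $\bP^\mu(B_2)=1$, hence $\bP^\mu(B)=1$, and taking the infimum over such $B$ gives $\cF$-full outer measure. Once corrected in this way, your argument is precisely the paper's (the paper routes through the full $\bP^\mu$-completion $\cF^\mu\supset\cF$, but the computation is identical).
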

\begin{proof}
Let us denote the $\bP^\mu$-outer measure relative to $\cF^u$ and $\cF$ by $\bP^\mu_{1*}$ and $\bP^\mu_{2*}$, respectively. For any subset $A \subset \Omega$, we have
\[
	\bP^\mu_{1*}(A)=\inf\{\bP^\mu(B):A\subset B\in \cF^u\},\quad \bP^\mu_{2*}(A)=\inf\{\bP^\mu(B):A\subset B\in \cF\}.  
\]
Let $\cF^\mu$ be the completion of $\cF^u$ relative to $\bP^\mu$ and $\bP^\mu_{3*}$ be the $\bP^\mu$-outer  measure relative to $\cF^\mu$.   Since $\cF^u\subset \cF\subset \cF^\mu$,  it follows that $\bP^\mu_{3*}\leq \bP^\mu_{2*}\leq \bP^\mu_{1*}$.  It suffices to show that $\bP^\mu_{1*}(\tilde{\Omega})=1$ implies $\bP^\mu_{3*}(\tilde{\Omega})=1$.  To do this,  suppose $\bP^\mu_{1*}(\tilde{\Omega})=1$ and take arbitrary $B\in \cF^\mu$ with $\tilde{\Omega}\subset B$.  By means of,  e.g.,  \cite[\S1.4,  Exercise 18b]{Fo99},  one gets a set $B'\in \cF^u$ with $B\subset B'$ and $B'\setminus B\in \cN$ (i.e.  $B'\setminus B\in \cF^\mu$ with $\bP^\mu(B'\setminus B)=0$).  Note that $\bP^\mu_{1*}(\tilde{\Omega})=1$ and $\tilde{\Omega}\subset B\subset B'$ imply $\bP^\mu(B')=1$.  Hence $\bP^\mu(B)=0$,  as arrives at $\bP^\mu_{3*}(\tilde{\Omega})=1$.  That completes the proof.  
\end{proof}

Now we have a position to set up the deletion of a negligible set from $E$.  Let $N\in \cE^u$ and set $\tilde{E}:=E\setminus N$.  The subspace topology on $\tilde{E}$ inherited from $E$ makes $\tilde{E}$ a Radon topological space. The Borel $\sigma$-algebra and the universally measurable $\sigma$-algebra on $\tilde{E}$ are denoted by $\tilde{\cE}$ and $\tilde{\cE}^u$, respectively. It follows that $\tilde{\cE}=\cE|_{\tilde E}$ and $\tilde{\cE}^u=\cE^u|_{\tilde{E}}$.  Suppose that $N$ is weakly $U$-negligible. Then,  we have a well-defined collection of kernels on $(\tilde{E},\tilde{\cE}^u)$ given by:
\[
	\tilde{U}^\alpha \tilde{f}:=(U^\alpha f)|_{\tilde{E}},\quad \alpha>0,  \tilde{f}\in p\tilde{\cE}^u,
\]
where $f\in p\cE^u$ is an arbitrary extension of $\tilde{f}$ to $E$. 
Moreover, it is worth noting that $\alpha \tilde{U}^\alpha 1_{\tilde{E}}\equiv 1$ on $\tilde{E}$ and the restriction $(\tilde{U}^\alpha)$ satisfies the resolvent equation: For $0<\alpha\leq \beta$ and $\tilde{f}\in p\tilde{\cE}^u$, we have:
\[
\tilde U^\alpha \tilde f=\tilde U^\beta \tilde f+(\beta-\alpha) \tilde U^\alpha \tilde U^\beta \tilde f.  
\]
The collection $(\tilde{U}^\alpha)$ is referred to as the \emph{restriction} of $(U^\alpha)$ to $\tilde{E}$.

The presented result expands upon \cite[Theorem2.1]{BCR22} by considering a general case of a (simple) Markov process with right-continuous paths, rather than c\'adl\'ag paths.  A technique involving outer measures is employed, enabling the state space to be a Radon topological space instead of being limited to a Lusin or Polish space.  It is important to note that the definition of (HD1) is closely related to the topology of the state space,  and the utilization of the inherited topology is crucial in the discussion of Theorem\ref{THM24}. An example provided in Example~\ref{EXA46} illustrates that assigning a non-inherited topology to $\tilde{E}$ can lead to a situation where $(\tilde{P}_t)$ satisfies (HD1) even if $\tilde{E}$ is not quasi-absorbing for $X$.

\begin{theorem}\label{THM24}
Let $N\in \cE^u$ be weakly $U$-negligible and $(\tilde{U}^\alpha)_{\alpha>0}$ be the restriction of $(U^\alpha)_{\alpha>0}$ to $\tilde{E}=E\setminus N$,  which is equipped with the subspace topology of $E$.  Then there is a Markov transition function $(\tilde{P}_t)_{t\geq 0}$ on $(\tilde{E},\tilde{\cE}^u)$ satisfying (HD1),  whose resolvent is $(\tilde{U}^\alpha)_{\alpha>0}$,  if and only if $\tilde{E}$ is quasi-absorbing for $X$.    {Meanwhile $(\tilde{P}_t)$ is identical to the restriction of $(P_t)$ to $\tilde{E}$,  i.e.  $\tilde{P}_t(x,B)=P_t(x,B)$ for $t\geq 0,  x\in \tilde{E}$ and $B\in \tilde{\cE}^u$. }
\end{theorem}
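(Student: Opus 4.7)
The plan is to compare, for $x\in\tilde E$, the canonical realization $(X,\bP^x)$ of $(P_t)$ on $\Omega$ with the canonical realization of $(\tilde P_t)$ on its own path space $\tilde\Omega$; because $\tilde E$ carries the subspace topology, the inclusion $\iota:\tilde\Omega\hookrightarrow\Omega$ is well defined between these right-continuous path spaces, and the proof rides on this embedding.

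\textbf{Sufficiency.} Assuming $\tilde E$ is quasi-absorbing, I set $\tilde P_t(x,B):=P_t(x,B)$ for $x\in\tilde E$ and $B\in\tilde\cE^u$. The measurable inclusion $\{X_s\in\tilde E,\,\forall s\ge 0\}\subset\{X_t\in\tilde E\}\in\cF^u$ forces $\bP^x(X_t\in\tilde E)=1$, i.e.\ $\tilde P_t(x,\tilde E)=1$, so $\tilde P_t$ is Markov; the semigroup and normality axioms are inherited from $(P_t)$, and weak $U$-negligibility (applied via the extension by zero) yields the resolvent identity. For (HD1), the set $\tilde\Omega_0:=\{\omega\in\Omega:X_t(\omega)\in\tilde E,\,\forall t\ge 0\}$ corresponds under the subspace topology to the canonical space of right-continuous maps $[0,\infty)\to\tilde E$; by quasi-absorbing and Lemma~\ref{LM23} (which makes $\bP^x_*(\tilde\Omega_0)=1$ independent of the choice of augmentation), the trace of $\bP^x$ on $\tilde\Omega_0$ is a well-defined probability measure, and the restriction of $X$ realizes $(\tilde P_t)$ with right-continuous paths in $\tilde E$.

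\textbf{Necessity and the identity.} Conversely, let $(\tilde X,\tilde\bP^x)$ be the canonical realization of $(\tilde P_t)$ on $\tilde\Omega$ and set $\bQ^x:=\iota_*\tilde\bP^x$ on $(\Omega,\cF^u)$. Under $\bQ^x$ the paths of $X$ stay in $\tilde E$ by construction, so it suffices to prove $\bQ^x=\bP^x$ for each $x\in\tilde E$: this delivers both quasi-absorbing, via $\bP^x_*(\iota(\tilde\Omega))=1$, and the identity $\tilde P_t(x,B)=P_t(x,B)$ from one-dimensional marginals. To match the two measures, I compare their values on cylinders $\{X_{t_i}\in A_i,\,i\le n\}$ with $A_i\in\tilde\cE^u$: Laplace-transforming in the increments $s_i=t_i-t_{i-1}$ turns each cylinder probability into an iterated resolvent, $U^{\alpha_1}(1_{A_1}U^{\alpha_2}(1_{A_2}\cdots))(x)$ for $\bP^x$ and the analogous $\tilde U^\alpha$-expression for $\bQ^x$. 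Since each nested factor vanishes on $N$ (because $A_i\subset\tilde E$), an inductive use of weak $U$-negligibility from innermost to outermost identifies the two iterated resolvents at $x\in\tilde E$. Laplace uniqueness yields equality of the cylinder probabilities for Lebesgue-a.e.\ $\vec s$, and this is upgraded to every $\vec s$ using right-continuity in each $s_i$ of $s_i\mapsto \tilde P_{s_i}\tilde U^\beta\tilde f(x)$ (afforded by the $\beta$-excessiveness of $\tilde U^\beta\tilde f$ under (HD1)), together with an approximation $\beta\tilde U^\beta 1_{A_i}\to 1_{A_i}$ as $\beta\to\infty$. A standard monotone-class argument then extends the equality to arbitrary $A_i\in\cE^u$, using that both measures annihilate coordinate events lying inside $N$: trivially for $\bQ^x$, and for $\bP^x$ by iterating the now-established $P_t(y,N)=0$ for $y\in\tilde E$ through the Markov property.

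\textbf{Main obstacle.} The delicate step is the passage from a.e.-$\vec s$ to pointwise equality of the cylinder probabilities; it relies essentially on the $\beta$-excessive right-continuous regularization furnished by (HD1) together with the subspace topology on $\tilde E$ (which ensures $\iota$ is compatible with right-continuity). Both hypotheses are indispensable, as Example~\ref{EXA46} demonstrates.
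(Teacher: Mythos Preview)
Your sufficiency argument is essentially the paper's, and your overall plan for necessity---push $\tilde{\bP}^x$ forward via $\iota$ and identify the resulting measure with $\bP^x$---is sound and is in fact what lies behind the paper's citation of \cite[(19.9)]{Sh88}. The difference is in execution, and there your ``upgrade'' step has a real gap.

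The problem is twofold. First, the approximation $\beta\tilde U^\beta 1_{A_i}\to 1_{A_i}$ is \emph{not} available pointwise for a general $A_i\in\tilde\cE^u$ under (HD1) alone: it is equivalent to right-continuity of $t\mapsto \tilde P_t 1_{A_i}(y)$ at $t=0$, which is exactly what you lack. Second, even granting right-continuity of $s\mapsto\tilde P_s g$ for $g$ $\beta$-excessive (this follows from the supermean property and semigroup identity, not from (HD1) as you wrote), in the iterated expression $\tilde P_{s_1}\bigl(1_{A_1}\tilde P_{s_2}(\cdots)\bigr)$ the intermediate integrands $1_{A_i}\cdot(\text{something})$ are not excessive, so separate right-continuity in each $s_i$ does not follow from your stated mechanism.

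The paper avoids this entirely by working one dimension at a time with $d$-uniformly continuous test functions: for $f\in C_d(E)$, $\tilde f=f|_{\tilde E}\in C_d(\tilde E)$, and (HD1) gives that both $t\mapsto P_tf(x)$ and $t\mapsto\tilde P_t\tilde f(x)$ are right-continuous directly from path right-continuity and dominated convergence. Laplace uniqueness then yields $P_tf(x)=\tilde P_t\tilde f(x)$ for \emph{all} $t$, and a monotone class argument (using $\sigma(C_d(E))=\cE$) extends this to all $f\in b\cE^u$. Once the one-dimensional identity $\tilde P_t(x,\cdot)=P_t(x,\cdot)|_{\tilde E}$ is in hand, the multi-dimensional cylinders match by the Markov property with no further Laplace inversion needed, and your pushforward argument goes through cleanly. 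Replacing your excessive-function regularization by this $C_d$ step closes the gap.
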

\begin{proof}
We first argue the sufficiency and suppose $\tilde{E}$ is quasi-absorbing for $X$.  In view of Remark~\ref{RM22}~(3),  $P_t(x,N)=0$ for any $t\geq 0$ and $x\in \tilde{E}$.  Hence $\tilde{P}_t(x,B):=P_t(x,B)$ for $t\geq 0,  x\in \tilde{E}$ and $B\in \tilde{\cE}^u(\subset \cE^u)$ defines a collection of Markov kernels on $(\tilde{E},\tilde{\cE}^u)$.  It is straightforward to verify that it is a normal transition function on $(\tilde{E},\tilde{\cE}^u)$ whose resolvent is $(\tilde{U}^\alpha)$. We have to prove (HD1) for $(\tilde{P}_t)$.  %On account of Remark~\ref{RMA4}~(3),     it suffices to find out a realization of $(\tilde{P}_t)$ in the sense of Definition~\ref{DEFA3}.  
To do this,  recall that the collection \eqref{eq:21} is the canonical realization of $(P_t)$.  Set
\[
	\tilde{\Omega}:=\{\omega\in \Omega: X_t\in \tilde{E},\forall t\geq 0\}
\]
and for any ${\omega}\in \tilde{\Omega}$ and $t\geq 0$,  $\tilde{X}_t(\omega):=X_t(\omega)$.  Note that $\theta_t \tilde{\Omega}\subset \tilde{\Omega}$.  Let $\tilde{\theta}_t$ be the restriction of $\theta_t$ to $\tilde{\Omega}$.  Define $\tilde{\cG}$ and $\tilde{\cG}_t$ as the traces of $\cF^u$ and $\cF^u_t$ on $\tilde{\Omega}$,  i.e.  $\tilde{\cG}:=\{B\cap \tilde{\Omega}: B\in \cF^u\}$ and $\tilde{\cG}_t:=\{B\cap \tilde{\Omega}: B\in \cF^u_t\}$.  Clearly $t\mapsto \tilde{X}_t$ is an $\tilde{E}$-valued, right continuous process $\cE^u$-adapted to $\tilde{\cG}_t$.  For any initial law $\mu$ carried by $\tilde{E}$,  let $\tilde{\bP}^\mu$ be the trace of $\bP^\mu$ on $\tilde{\Omega}$,  i.e.  $\tilde{\bP}^\mu(B\cap \tilde{\Omega}):=\inf\{\bP^\mu(B\cap \Gamma): \tilde{\Omega}\subset \Gamma\in \cF^u\}$ for any $B\in \cF^u$.  %Write $\tilde{\bP}^x$ for $\tilde{\bP}^\mu$ with $\mu=\delta_x$.  
By virtue of \cite[(A1.3)]{Sh88} and that $\tilde{E}$ is quasi-absorbing for $X$,  one gets that $\tilde{\bP}^\mu$ is a probability measure on $(\tilde{\Omega}, \tilde{\cG})$.  Applying \cite[(A1.6)]{Sh88} and the Markov property of \eqref{eq:21},  we have that for any $\tilde f\in b\tilde{\cE}^u$ and $\tilde{G}\in \tilde{\cG}_t$,  there exists $f\in b\cE^u$ with $f|_{\tilde{E}}=\tilde f$ and $G\in \cG_t$ with $\tilde{G}=G\cap \tilde{\Omega}$,  so that
\[
\begin{aligned}
	\tilde{\bP}^\mu\{\tilde{f}(\tilde{X}_{t+s})1_{\tilde{G}}\}&=\bP^\mu\{f(X_{t+s})1_G\}=\bP^\mu\{P_s f(X_t)1_G\}\\
	&=\tilde{\bP}^\mu\{ P_s f(X_t)|_{\tilde{\Omega}} 1_{\tilde{G}}\}=\tilde{\bP}^\mu\{ \tilde P_s f(\tilde X_t) 1_{\tilde{G}}\},
\end{aligned}\]
as leads to the Markov property of $\tilde{X}=(\tilde{\Omega},\tilde{\cG},\tilde{\cG}_t,  \tilde{X}_t, \tilde{\bP}^\mu)$ with transition semigroup $\tilde{P}_t$.  Using \cite[(A1.6)]{Sh88} again we also have that for any $A\in \tilde{\cE}$,  $\tilde{\bP}^\mu(\tilde{X}_0\in A)=\bP^\mu(X_0\in A)=\mu(A)$.  Hence the initial law of $\tilde{X}$ is exactly $\mu$.  Eventually (HD1) is verified for $(\tilde{P}_t)$.  

To the contrary,  suppose $(\tilde{P}_t)_{t\geq 0}$ is a transition function on $(\tilde{E},\tilde{\cE}^u)$ satisfying (HD1),  whose resolvent is $(\tilde{U}^\alpha)_{\alpha>0}$.   We first show that $(P_t)$ is an extension of $(\tilde{P}_t)$ in the sense that for any $x\in \tilde{E}$,  $P_t(x,\cdot)$ is carried by $\tilde{E}$ and its restriction to $\tilde{E}$ is equal to $\tilde{P}_t(x,\dot)$.  To do this,  suppose $E$ is embedded into the compact metric space $\hat E$ with the metric $d$ compatible to the topology on $E$,  i.e. the subspace topology of $E$ relative to $(\hat E,d)$ is identical to the original topology of $E$.  Denote by $C_d(\tilde{E})$, and $C_d(E)$ the families of all real $d$-uniformly continuous functions on $\tilde{E}$ and $E$ respectively.  Note that $C_d(\tilde{E})=C_d(E)|_{\tilde{E}}:=\{f|_{\tilde{E}}: f\in C_d(E)\}$ and,  in view of \cite[(A2.1)]{Sh88},  $\cE=\sigma\{C_d(E)\}$ and $\tilde{\cE}=\sigma\{C_d(\tilde{E})\}$.   For any $\tilde{f}=f|_{\tilde{E}}\in C_d(\tilde{E})$ with $f\in C_d(E)$ and $x\in \tilde{E}$,  both $t\mapsto P_t f(x)$ and $t\mapsto \tilde{P}_t\tilde{f}(x)$ are continuous.  Hence $U^\alpha f(x)=\tilde{U}^\alpha \tilde{f}(x)$ for all $\alpha>0$ imply $P_tf(x)=\tilde{P}_t \tilde f(x)$ for all $t\geq 0$.  By a monotone class argument,  it yields that for any $t\geq 0$ and $x\in \tilde{E}$,  $\tilde{P}_t(f\cdot 1_{\tilde{E}})(x)=P_tf(x)$ for any $f\in b\cE$.  Since the extensions of a Borel measure to the universally measurable space are unique (see,  e.g., \cite[(A1.1)]{Sh88}),  it follows that $\tilde{P}_t(f\cdot 1_{\tilde{E}})(x)=P_tf(x)$ for any $f\in b\cE^u$.  Particularly,  $(P_t)$ is an extension of $(\tilde{P}_t)$.  Now let $\tilde{X}=(\tilde\Omega,  \tilde\cF^u,\tilde\cF^u_t,  \tilde X_t,\tilde \theta_t,  \tilde \bP^x)$ be the canonical realization of $(\tilde{P}_t)$ on $\tilde{E}$,  where $\tilde{\Omega}$ is the space of all right continuous maps from $\bR^+$ to $\tilde{E}$.  Applying \cite[Proposition~(19.9)]{Sh88},  one gets that for any initial law $\mu$ carried by $\tilde{E}$,  $\bP^\mu_*(\tilde{\Omega})=1$.  Note that
\[
\tilde{\Omega}=\{\omega\in \Omega: \omega(t)\in \tilde{E},\forall t\geq  0\}=\{X_t\in \tilde{E},\forall t\geq 0\}.  
\]
Therefore $\tilde{E}$ is quasi-absorbing for $X$ by the definition.  %That completes the proof. 
\end{proof}

We readily have the following corollary,  which straightforwardly extends  \cite[Theorem~2.1]{BCR22} to right continuous (simple) Markov processes.  

\begin{corollary}
Let $N\in \cE^u$ be weakly $U$-negligible and $(\tilde{U}^\alpha)_{\alpha>0}$ be the restriction of $(U^\alpha)_{\alpha>0}$ to $\tilde{E}=E\setminus N$,  which is endowed with the subspace topology of $E$.  If there is probability measure $\mu$ on $E$ such that $\mu(N)=0$ and $\{X_t\notin N,\forall t\geq 0\}$ does not have full $\bP^\mu$-outer measure,  then there is no Markov transition function on $(\tilde{E},\tilde{\cE})$ satisfying (HD1),  whose resolvent is $(\tilde{U}^\alpha)$.  
\end{corollary}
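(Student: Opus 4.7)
The plan is simply to deploy Theorem~\ref{THM24} in its contrapositive form. The hypotheses of the corollary are tailored to produce the failure of the quasi-absorbing property for $\tilde{E}$, so the work is mostly to cleanly bridge the Borel vs.\ universally measurable $\sigma$-algebra difference in the statement.

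First I would observe that since $\mu(N)=0$, the probability measure $\mu$ is carried by $\tilde{E}=E\setminus N$, so $\mu$ qualifies as an initial law on $\tilde{E}$ in the sense of Definition~\ref{DEF21}(3). Next, rewrite
\[
\{X_t\notin N,\ \forall t\geq 0\}=\{X_t\in \tilde{E},\ \forall t\geq 0\},
\]
so the assumption that the left-hand event does not have full $\bP^\mu$-outer measure directly negates the defining condition of $\tilde{E}$ being quasi-absorbing for $X$. Hence $\tilde{E}$ is \emph{not} quasi-absorbing.

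The only remaining subtlety is that Theorem~\ref{THM24} is phrased for transition functions on $(\tilde{E},\tilde{\cE}^u)$, whereas the corollary is phrased on the smaller $\sigma$-algebra $(\tilde{E},\tilde{\cE})$. I would resolve this by noting that any Markov transition function $(\tilde{P}_t)$ on $(\tilde{E},\tilde{\cE})$ extends uniquely to a transition function on $(\tilde{E},\tilde{\cE}^u)$ (cf.\ the uniqueness of extensions of Borel measures to the universally measurable completion, as invoked in the proof of Theorem~\ref{THM24}); moreover, the resolvent computed from the extension agrees on $p\tilde{\cE}^u$ with $(\tilde{U}^\alpha)$ as soon as it agrees on $p\tilde{\cE}$, and the right-continuity of paths required by (HD1) is a statement about the topology on $\tilde{E}$ and is preserved by the extension. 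Therefore the existence of a Borel transition function satisfying (HD1) with resolvent $(\tilde{U}^\alpha)$ would yield a universally measurable one as well.

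Combining these two steps, the contrapositive of Theorem~\ref{THM24} applied to the non-quasi-absorbing set $\tilde{E}$ rules out the universally measurable version, and hence the Borel version, of such a transition function. I expect no real obstacle; the main care-point is the $\tilde{\cE}$ vs.\ $\tilde{\cE}^u$ bookkeeping, which is handled by the standard extension argument already used in the proof of Theorem~\ref{THM24}.
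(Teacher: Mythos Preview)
Your proposal is correct and matches the paper's approach: the paper states the corollary as an immediate consequence of Theorem~\ref{THM24} without giving a separate proof, and your contrapositive argument is exactly the intended one-line deduction. Your additional care in bridging the $\tilde{\cE}$ versus $\tilde{\cE}^u$ discrepancy via the unique extension of Borel kernels to the universal completion is a legitimate clarification that the paper leaves implicit.
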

\begin{remark}
Suppose $N \in \cE$. Then we have 
\[
	\{X_t \notin N, \forall t \geq 0\} = \left(\{T_N = \infty\} \cap \{X_0 \notin N\}\right) \in \cF,
\]
where $T_N := \inf\{t > 0: X_t \in N\}$ is an $\cF_{t+}$-stopping time. Since $\bP^\mu(X_0 \notin N) = 1$, the condition in this corollary is equivalent to $\bP^\mu(T_N < \infty) > 0$, which corresponds to the condition in \cite[Theorem~2.1]{BCR22}.
\end{remark}

\section{Restriction of a right process outside a negligible set}

The goal of this section is to examine the restriction of a right process outside a negligible set. We consider a normal Markov transition function $(P_t)$ on $(E,\cE^u)$ satisfying (HD1), and the collection $X$ in \eqref{eq:21} represents a realization of $(P_t)$. In this section, we make an additional assumption that $X$ satisfies (HD2) as defined in Definition~\ref{DEFA7}. This assumption ensures that the augmented system
\begin{equation}\label{eq:30}
X=(\Omega, \cF,\cF_t,X_t,\theta_t, \bP^x)
\end{equation}
is a right process on $E$, and $(P_t)$ is a right semigroup, as defined in Definition~\ref{DEFA9}. In other words, $X$ remains the canonical realization of $(P_t)$, but with the filtration replaced by the augmented one. Moreover, for every $\alpha > 0$ and $f \in \cS^\alpha$, the function $t \mapsto f(X_t)$ is right continuous $\bP^x$-almost surely for every $x \in E$. Here, $\cS^\alpha$ denotes the family of all $\alpha$-excessive functions, where $f \in p\cE^u$ is in $\cS^\alpha$ if and only if $e^{-\alpha t}P_t f \uparrow f$ as $t \downarrow 0$. This additional assumption (HD2) implies the strong Markov property of $X$ and the right continuity of the augmented natural filtration $(\cF_t)$. The definitions of quasi-absorbing sets and quasi-polar sets remain the same as in Definition~\ref{DEF21}, thanks to Lemma~\ref{LM23}.

In the appendix of \cite{BCR20}, some interesting observations are made regarding the (Borel) right process with respect to the \emph{natural topology}, which encompasses topologies that are coarser than the fine topology, including the original topology on $E$. It is shown that the (Borel) right process under the natural topology always has almost surely right-continuous trajectories. This observation is significant as it allows us to relax the restrictions on the topology of $E$ when considering various problems related to (Borel) right process. Similarly, in the discussion of this section, we will adopt this approach and consider the right process without placing strong emphasis on the original topology of $E$. It is important to note that the original topology on $E$ is solely used to define right-continuous trajectories in (HD1) and does not play a role in the measurable structures or (HD2).

\subsection{Weakly $U$-negligible set}

{
Let $N\in \cE^u$ be a weakly $U$-negligible set and put $\tilde{E}:=E\setminus N$.  Denote by $(\tilde{U}^\alpha)_{\alpha>0}$ the restriction of $(U^\alpha)_{\alpha>0}$ to $\tilde{E}:=E\setminus N$.  
 Instead of equipping $\tilde{E}$ with the subspace topology,  we consider 
natural topologies relative to $(\tilde{U}^\alpha)$ in the following sense.

\begin{definition}
\begin{itemize}
\item[(1)] Given $\alpha>0$,  $\tilde{f}\in p\tilde{\cE}^u$ is called \emph{$\tilde{U}^\alpha$-excessive} provided that $\beta \tilde{U}^{\alpha+\beta}\tilde{f}\leq \tilde{f}$ for all $\beta>0$ and $\lim_{\beta\rightarrow \infty}\beta\tilde{U}^{\alpha+\beta}\tilde{f}=\tilde{f}$.  Denote by $\tilde{\cS}^\alpha$ the family of all $\tilde{U}^\alpha$-excessive function.  
\item[(2)] The \emph{$\tilde{U}$-fine topology} $\tilde{\tau}_f$ on $\tilde E$ is defined as the coarsest topology on $\tilde{E}$ making all functions in $\cup_{\alpha>0}\tilde{\cS}^\alpha$ continuous.  
\item[(3)] A topology $\tilde{\tau}$ on $\tilde{E}$ is called \emph{natural},   if it is Radonian,  i.e.  $(\tilde{E},\tilde{\tau})$ is homoemorphic to a universally measurable set (equipped with the subspace topology) of some compact metric space,  generates the Borel $\sigma$-algebra $\mathcal{B}(\tilde{\tau})=\tilde{\cE}:=\cE|_{\tilde{E}}$ and $\tilde{\tau}\subset \tilde{\tau}_f$. 
\item[(4)] A Markov process (resp.  transition function) $\tilde X$ (resp.  $(\tilde{P}_t)$) on $\tilde{E}$ is called a right process (resp.  right semigroup) relative to some topology $\tilde{\tau}$ on $\tilde{E}$ provided that $\tilde{\tau}$ generates the Borel $\sigma$-algebra $\mathcal{B}(\tilde{\tau})=\cE|_{\tilde{E}}$ and $\tilde{X}$ (resp. $(\tilde{P}_t)$) satisfies (HD1)  and (HD2) relative to $\tilde{\tau}$. 
\end{itemize}
\end{definition}
\begin{remark}
\begin{itemize}
\item[(1)] When $\tilde{U}^\alpha$ is the resolvent of some right process on $\tilde{E}$,  the definitions of $\tilde{U}^\alpha$-excessive functions and $\tilde{U}$-fine topology coincide with those of usual excessive functions and fine topology.  
\item[(2)] The universally measurable $\sigma$-algebra on $\tilde{E}$ relative to arbitrary natural topology $\tilde{\tau}$ is  $\mathcal{B}^u(\tilde{\tau})=\tilde{\cE}^u:=\cE^u|_{\tilde{E}}$. 
\item[(3)] The subspace topology $\tilde{\tau}_0$ on $\tilde{E}$ (of the original topology on $E$) is natural.  To see this,  let $\tau$ and $\tau_f$ be the original topology and fine topology on $E$.   Since the restriction of $\alpha$-excessive function for $X$ to $\tilde{E}$ is $\tilde{U}^\alpha$-excessive,  it follows that $\tau_f|_{\tilde{E}}\subset \tilde{\tau}_f$,  where $\tau_f|_{\tilde{E}}$ stands for the subspace topology of $\tau_f$ on $\tilde{E}$.  Therefore $\tilde{\tau}_0=\tau|_{\tilde{E}}\subset \tau_f|_{\tilde{E}}\subset \tilde{\tau}_f$,  as arrives at the conclusion.   
\item[(4)] If $(\tilde{P}_t)$ is a right semigroup relative to some topology $\tilde{\tau}$  on $\tilde{E}$ whose resolvent is $(\tilde{U}^\alpha)$,  then $\tilde{\tau}$ must be natural.  
\end{itemize}
\end{remark}
 
%  a Radon topology $\tilde{\tau}$ on $\tilde{E}$ that generates the Borel $\sigma$-algebra $\mathcal{B}(\tilde{\tau})=\tilde{\cE}:=\cE|_{\tilde{E}}$. The corresponding universally measurable $\sigma$-algebra is Let $(\tilde{U}^\alpha)_{\alpha>0}$ be the restriction of $(U^\alpha)_{\alpha>0}$ to $\tilde{E}:=E\setminus N$.  

%Let $\tilde{X}$ (resp.  $(\tilde{P}_t)$) be a Markov process (resp.  transition function) on $\tilde{E}$.  
%We say that $\tilde{X}$ (resp. $(\tilde{P}_t)$) is a right process (resp.  right semigroup) on $\tilde{E}$ relative to a natural topology $\tilde{\tau}$ on $\tilde{E}$ if it satisfies (HD1)  and (HD2) relative to $\tilde{\tau}$. 

Having established the necessary setups, we are now poised to unveil one of the principal results in this section.

\begin{theorem}\label{THM31}
Let $N\in \cE^u$ be weakly $U$-negligible and $(\tilde{U}^\alpha)_{\alpha>0}$ be the restriction of $(U^\alpha)_{\alpha>0}$ to $\tilde{E}:=E\setminus N$.  The following are equivalent:
\begin{itemize}
\item[(i)] There is a right semigroup $(\tilde{P}_t)_{t\geq 0}$ on $\tilde{E}$ relative to the subspace topology,  whose resolvent is $(\tilde{U}^\alpha)_{\alpha>0}$.
\item[(ii)] There is a right semigroup $(\tilde{P}_t)_{t\geq 0}$ on $\tilde{E}$ relative to some natural topology,  whose resolvent is $(\tilde{U}^\alpha)_{\alpha>0}$.
\item[(iii)] There is a right semigroup $(\tilde{P}_t)_{t\geq 0}$ on $\tilde{E}$ relative to all natural topology,  whose resolvent is $(\tilde{U}^\alpha)_{\alpha>0}$.
\item[(iv)] $\tilde E$ is quasi-absorbing for $X$.  
\end{itemize}
Meanwhile all the right semigroups above are identical to the restriction of $(P_t)$ to $\tilde{E}$,  i.e.  $\tilde{P}_t(x,B)=P_t(x,B)$ for $t\geq 0,  x\in \tilde{E}$ and $B\in \tilde{\cE}^u$.  
% There exists a Radonian topology $\tilde{\tau}$ on $\tilde{E}$, with its Borel $\sigma$-algebra $\tilde{\cE}=\cE|_{\tilde{E}}$,  and a right semigroup $(\tilde{P}_t)_{t\geq 0}$ on $\tilde{E}$ relative to $\tilde{\tau}$,  whose resolvent is $(\tilde{U}^\alpha)_{\alpha>0}$,  if and only if 
\end{theorem}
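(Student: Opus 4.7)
My plan is to follow the template of Theorem~\ref{THM24}, adapted to the natural topology framework. The implications $(iii)\Rightarrow(i)\Rightarrow(ii)$ are immediate because the subspace topology $\tilde\tau_0$ is itself natural (Remark~3.2(3)), and the identification of every such right semigroup with the restriction of $(P_t)$ will emerge from both substantive arguments.

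For $(iv)\Rightarrow(iii)$, assuming $\tilde E$ quasi-absorbing I would set $\tilde P_t(x,B):=P_t(x,B)$ for $x\in\tilde E$ and $B\in\tilde\cE^u$: by Remark~2.2(3), $P_t(x,N)=0$ on $\tilde E$, so this is a normal Markov transition function with resolvent $\tilde U^\alpha$. The real work is to verify (HD1) and (HD2) relative to an \emph{arbitrary} natural $\tilde\tau$, since $\tilde\tau$ need not be comparable with $\tilde\tau_0$. My strategy is to lift every $\tilde U^\beta$-excessive function back to $E$: for $\tilde g\in b\tilde\cS^\beta$, define $g:E\to[0,\infty]$ by $g|_{\tilde E}=\tilde g$ and $g|_N\equiv+\infty$. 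Using $P_t(x,N)=0$ on $\tilde E$ together with $e^{-\beta t}\tilde P_t\tilde g\uparrow\tilde g$, a short computation shows $g\in\cS^\beta$ on $E$. Then (HD2) for $X$ makes $t\mapsto g(X_t)$ right-continuous $\bP^x$-a.s.\ in $[0,\infty]$, which on the full-outer-measure event $\tilde\Omega:=\{X_t\in\tilde E,\forall t\geq 0\}$ collapses to right-continuity of the finite-valued $t\mapsto\tilde g(X_t)$. Running this over a generating sub-family of $\cup_\beta\tilde\cS^\beta$ shows that the restricted paths are $\tilde\tau_f$-right-continuous, hence $\tilde\tau$-right-continuous, simultaneously delivering (HD1) and (HD2) for $\tilde P_t$ relative to~$\tilde\tau$.

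For $(ii)\Rightarrow(iv)$, let $\tilde X$ denote the canonical realization of $\tilde P_t$. For each $g\in b\cS^\alpha$, Remark~3.2(3) gives $\tilde g:=g|_{\tilde E}\in b\tilde\cS^\alpha$, and (HD2) for $X$ and for $\tilde X$ respectively make both $t\mapsto P_tg(x)$ and $t\mapsto\tilde P_t\tilde g(x)$ right-continuous. Their Laplace transforms coincide on $\tilde E$ because $\tilde U^\beta\tilde g=U^\beta g|_{\tilde E}$, so uniqueness forces $P_tg(x)=\tilde P_t\tilde g(x)$ for every $t\geq 0$ and $x\in\tilde E$. The class
\[
	\cH:=\bigl\{f\in b\cE^u:\ {\textstyle\int_E f\,dP_t(x,\cdot)=\int_{\tilde E}f\,d\tilde P_t(x,\cdot)}\text{ for all } t\geq 0,\ x\in\tilde E\bigr\}
\]
is then a vector space, closed under bounded convergence, and contains $b\cS^\alpha$. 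Since $\alpha U^\alpha f\to f$ pointwise boundedly for $f\in bC_d(E)$ (from right-continuity of $X$-paths and continuity of $f$), we obtain $bC_d(E)\subset\cH$; a monotone class argument via \cite[(A2.1) and (A1.1)]{Sh88} then extends $\cH$ to all of $b\cE^u$. Taking $f=1_N$ forces $P_t(x,N)=0$ for all $x\in\tilde E$ and $t\geq 0$, after which \cite[Proposition~(19.9)]{Sh88} applies exactly as in the proof of Theorem~\ref{THM24} to give $\bP^\mu_*(\tilde\Omega)=1$ for every initial law $\mu$ carried by $\tilde E$, i.e.\ $\tilde E$ is quasi-absorbing for $X$.

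The main obstacle I anticipate is the monotone class step in $(ii)\Rightarrow(iv)$: one needs enough quantitative control on $b\cS^\alpha$ -- through its differences, the approximate identity $\alpha U^\alpha$, and bounded-convergence closure -- to detect an arbitrary universally measurable set $N$. A secondary technicality, in $(iv)\Rightarrow(iii)$, is that the $(+\infty)$-extension of $\tilde g$ is $[0,\infty]$-valued, but since (HD2) for $X$ already applies to $\cS^\beta$-functions taking values in $[0,\infty]$, this causes no genuine difficulty.
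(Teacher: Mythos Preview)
Your cycle $(iv)\Rightarrow(iii)$ and $(ii)\Rightarrow(iv)$ differs from the paper's route (which proves $(iv)\Leftrightarrow(i)$ via \cite[(12.30)]{Sh88} and Theorem~\ref{THM24}, then $(ii)\Rightarrow(iii)$ by applying \cite[(10.18)]{Sh88} to the \emph{already-existing} right process $\tilde X$), and both of your substantive steps contain gaps.

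In $(iv)\Rightarrow(iii)$, the claim that the $(+\infty)$-extension $g$ of $\tilde g\in b\tilde\cS^\beta$ lies in $\cS^\beta$ is false. At a point $x\in N$ with $U^{\beta+\gamma}(x,N)=0$ (which is allowed, since $N$ is only \emph{weakly} $U$-negligible---take e.g.\ $N=\{0\}$ for reflecting Brownian motion on $[0,1]$), one has $\gamma U^{\beta+\gamma}g(x)=\gamma\int_{\tilde E}U^{\beta+\gamma}(x,dy)\tilde g(y)\leq\|\tilde g\|<\infty$, so the limit as $\gamma\to\infty$ cannot be $g(x)=+\infty$. The function $g$ is only $\beta$-supermedian; one must pass to its excessive regularization $\hat g$, which agrees with $\tilde g$ on $\tilde E$. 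Even granting that fix, the next step---``running this over a generating sub-family of $\cup_\beta\tilde\cS^\beta$ shows the restricted paths are $\tilde\tau_f$-right-continuous''---does not go through: the fine topology $\tilde\tau_f$ is generated by \emph{all} of $\cup_\beta\tilde\cS^\beta$, an uncountable family, so no countable sub-family suffices, and the exceptional sets cannot simply be intersected. What you actually need is right-continuity of $t\mapsto\tilde h(X_t)$ for a countable $\tilde h$-family generating the metrizable topology $\tilde\tau$; but such $\tilde h\in C_{\tilde d_1}(\tilde E)$ are only $\tilde\tau_f$-continuous, not in $\tilde\cS^\beta$, and they need not extend to finely continuous functions on $E$ (since $\tau_f|_{\tilde E}\subset\tilde\tau_f$ may be strict). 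The paper sidesteps this by first obtaining a right process on $\tilde E$ (via $(iv)\Rightarrow(i)$) and then applying \cite[(10.18)]{Sh88} \emph{to that process}, for which $\tilde\tau_f$-continuous functions automatically yield right-continuous compositions.

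In $(ii)\Rightarrow(iv)$, your Laplace-transform plus monotone-class argument correctly recovers $\tilde P_t=P_t|_{\tilde E}$ (and hence $P_t(x,N)=0$ for $x\in\tilde E$). But the final appeal to \cite[(19.9)]{Sh88} ``exactly as in Theorem~\ref{THM24}'' fails: that argument identifies the canonical sample space $\tilde\Omega$ of $\tilde P_t$ with $\{X_t\in\tilde E,\forall t\}\subset\Omega$, which uses that paths $\tilde\tau_0$-right-continuous in $\tilde E$ are automatically right-continuous in $E$. Under a general natural topology $\tilde\tau$, a $\tilde\tau$-right-continuous path need not be $\tilde\tau_0$-right-continuous, so the canonical realization of $\tilde P_t$ does not embed into $\Omega$. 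To close this gap you would first need to show $(\tilde P_t)$ satisfies (HD1) relative to $\tilde\tau_0$---which is precisely the content of the paper's $(ii)\Rightarrow(i)$ argument.
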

\begin{proof}
Obviously (iii) implies (i),  and (i) implies (ii).  The assertion that (iv) implies (i) has already been proved in \cite[Theorem~(12.30)]{Sh88},  The contrary that (i) implies (iv) is a consequence of Theorem~\ref{THM24}.  It suffices to argue that (ii) implies (iii).  

Suppose that (ii) holds true and that $\tilde{\tau}$ is such a natural topology on $\tilde{E}$.   Let
\[
	\tilde X=(\tilde \Omega, \tilde \cF,\tilde \cF_t,\tilde X_t,\tilde \theta_t, \tilde \bP^x)
\]
be the augmented canonical realization of $\tilde{P}_t$ relative to $\tilde{\tau}$.  The family of null sets is denoted by $\tilde{\cN}$ (see Appendix~\ref{APPA5}).  The fine topology of $\tilde{X}$ is actually the $\tilde{U}$-fine topology $\tilde{\tau}_f$.  

Take another natural topology $\tilde{\tau}_1$ on $\tilde{E}$.  Let $(\hat{E}_1, \tilde{d}_1)$ be a compact metric space in which $(\tilde{E},\tilde{\tau}_1)$ is embedded.  Denote by $C_{\tilde{d}_1}(\tilde{E})$ the family of all $\tilde{d}_1$-uniformly continuous functions on $\tilde{E}$.  Take a family $\tilde{\cC}_1\subset C_{\tilde{d}_1}(\tilde{E})$ of countably many functions such that $\tilde{\cC}_1$ is dense in $C_{\tilde{d}_1}(\tilde{E})$.  (Such a family exists because $C_{\tilde{d}_1}(\tilde{E})=C(\hat{E}_1)|_{\tilde{E}}$ and $C(\hat{E}_1)$ is separable.) In view of page 251 in \cite{RF10},  $\tilde{\cC}_1$ generates the topology $\tilde{\tau}_1$.  In other words,  $\tilde{E} \ni x_n\rightarrow x\in \tilde{E}$ relative to $\tilde{\tau}_1$ if and only if $f(x_n)\rightarrow f(x)$ for all $f\in \tilde \cC_1$. 

Since $\tilde{\tau}_1\subset \tilde{\tau}_f$,  every $f\in \tilde{\cC}_1$ is $\tilde{\tau}_f$-continuous.  By means of,  e.g.,  \cite[(10.18)]{Sh88},  $t\mapsto f(\tilde X_t)$ is a.s.  right continuous for $f\in \tilde{\cC}_1$.  In other words,  there is a null set $\tilde\Gamma_f\in \tilde \cN$ such that $t\mapsto f(\tilde X_t(\omega))$ is right continuous for every $\omega\in \tilde \Omega\setminus \tilde \Gamma_f$.  
Set $\tilde\Gamma:=\cup_{f\in \cC}\tilde \Gamma_f$.   Since $\tilde\cC$ is countable,  it follows that $\tilde \Gamma\in\tilde \cN$.  For any $\omega\in \tilde{\Omega}\setminus \tilde{\Gamma}$,  since $t\mapsto f(\tilde{X}_t(\omega))$ is right continuous for all $f\in \tilde{\cC}_1$,  one obtains that $t\mapsto \tilde{X}_t(\omega)$ is right continuous relative to  $\tilde{\tau}_1$ for every $\omega\in \tilde{\Omega}\setminus \tilde{\Gamma}$.  Define $\tilde{\Omega}_1:=\tilde{\Omega}\setminus \tilde{\Gamma}$,  and let $(\tilde{\cF}^1,\tilde{\cF}^1_t,\tilde{X}^1_t, \tilde{\bP}^x_1)$ be the restriction of $(\tilde{\Omega},\tilde{\cF},\tilde{\cF}_t, \tilde{X}_t, \tilde{\bP}^x)$ to $\tilde{\Omega}_1$.   Obviously $\tilde{\theta}_t{\tilde{\Omega}_1}\subset \tilde{\Omega}_1$.  Denote its restriction to $\tilde{\Omega}_1$ by $\tilde{\theta}^1_t$.   It is straightforward to verify that
\[
	\tilde{X}^1=(\tilde \Omega^1, \tilde \cF^1,\tilde \cF_t^1,\tilde X_t^1,\tilde \theta_t^1, \tilde \bP^x_1)
\]
is a realization of $\tilde{P}_t$ relative to $\tilde{\tau}_1$.  Therefore $\tilde{P}_t$ also satisfies (HD1) relative to $\tilde{\tau}_1$.  Specifically $(\tilde{P}_t)$ is a right semigroup relative to $\tilde{\tau}_1$.  

The identification between $\tilde{P}_t$ and the restriction of $P_t$ to $\tilde{E}$ can be easily concluded by virtue of Theorem~\ref{THM24}.  That completes the proof. 
\end{proof}
}

\subsection{$U$-negligible set}

Let us now investigate the restriction $\tilde{U}^\alpha$ of $U^\alpha$ to the complement $\tilde{E}=E\setminus N$ of a $U$-negligible set $N\in \cE^u$. This stronger assumption on $N$ has an important implication: $N$ does not contain any non-empty finely open subsets, as shown in \cite[(10.12)]{Sh88}. (In contrast, a weakly $U$-negligible set may contain non-empty finely open subsets. For example, in the example mentioned in Remark~\ref{RM22}~(2), the set $\{2\}$ is both weakly $U$-negligible and finely open.)  As a result,  we have less chances to build a right process for $\tilde{U}^\alpha$.  

{
\begin{theorem}\label{THM32}
Let $N\in \cE^u$ be $U$-negligible and $(\tilde{U}^\alpha)_{\alpha>0}$ be the restriction of $(U^\alpha)_{\alpha>0}$ to $\tilde{E}:=E\setminus N$.  The following are equivalent:
\begin{itemize}
\item[(i)] There is a right semigroup $(\tilde{P}_t)_{t\geq 0}$ on $\tilde{E}$ relative to the subspace topology,  whose resolvent is $(\tilde{U}^\alpha)_{\alpha>0}$.
\item[(ii)] There is a right semigroup $(\tilde{P}_t)_{t\geq 0}$ on $\tilde{E}$ relative to some natural topology,  whose resolvent is $(\tilde{U}^\alpha)_{\alpha>0}$.
\item[(iii)] There is a right semigroup $(\tilde{P}_t)_{t\geq 0}$ on $\tilde{E}$ relative to all natural topology,  whose resolvent is $(\tilde{U}^\alpha)_{\alpha>0}$.
\item[(iv)] $N$ is quasi-polar for $X$.  
\end{itemize}
Meanwhile all the right semigroups above are identical to the restriction of $(P_t)$ to $\tilde{E}$,  i.e.  $\tilde{P}_t(x,B)=P_t(x,B)$ for $t\geq 0,  x\in \tilde{E}$ and $B\in \tilde{\cE}^u$.  
\end{theorem}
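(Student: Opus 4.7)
The natural strategy is to bootstrap from Theorem~\ref{THM31}. A $U$-negligible set is \emph{a fortiori} weakly $U$-negligible, so Theorem~\ref{THM31} already supplies the equivalence $(i)\Leftrightarrow(ii)\Leftrightarrow(iii)$, the fact that each of these is equivalent to ``$\tilde E$ is quasi-absorbing for $X$,'' and the identification $\tilde P_t=P_t|_{\tilde E}$. It therefore suffices to prove that, under the stronger assumption $U^\alpha 1_N\equiv 0$ on the whole of $E$, $\tilde E$ is quasi-absorbing if and only if $N$ is quasi-polar.

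The implication ``$N$ quasi-polar $\Rightarrow\tilde E$ quasi-absorbing'' is immediate from Remark~\ref{RM22}(2). For the converse, I would assume $\tilde E$ is quasi-absorbing, fix an arbitrary initial law $\mu$ on $E$, and aim at $\bP^\mu_*\{X_s\notin N,\forall s>0\}=1$. Fubini combined with $U^\alpha 1_N\equiv 0$ gives $\bP^\mu(X_t\in N)=0$ for Lebesgue-a.e.\ $t>0$, so I can pick a sequence $t_n\downarrow 0$ with $\bP^\mu\{X_{t_n}\in\tilde E\}=1$ for every $n$. Setting
\[
B:=\{X_q\in\tilde E,\forall q\in\bQ\cap[0,\infty)\}\in\cF^u,
\]
one obtains a measurable superset of $\{X_s\in\tilde E,\forall s\geq 0\}$; quasi-absorption of $\tilde E$ applied to the law of $X_{t_n}$, which is concentrated on $\tilde E$, then forces $\bP^{X_{t_n}}(B)=1$ $\bP^\mu$-a.s., so the simple Markov property at $t_n$ yields $\bP^\mu(\theta_{t_n}^{-1}B)=1$. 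Since $\theta_{t_n}^{-1}B\supset\{X_s\in\tilde E,\forall s\geq t_n\}$, the countable intersection $\bigcap_n\theta_{t_n}^{-1}B$ is an $\cF^u$-set of full $\bP^\mu$-measure containing $\{X_s\in\tilde E,\forall s>0\}$, which is exactly the quasi-polarity of $N$.

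The main subtlety, as in Theorems~\ref{THM24} and~\ref{THM31}, will be to combine the Markov property with the outer-measure form of quasi-absorption when the event $\{X_s\in\tilde E,\forall s\geq 0\}$ lies outside $\cF^u$: one must first replace it by the measurable superset $B$ before invoking Markov, and only afterwards exploit the countable subadditivity of outer measure on full-probability events. Conceptually, the strengthening from ``weakly $U$-negligible'' to ``$U$-negligible'' is exactly what forces the semigroup to push mass out of $N$ at arbitrarily small positive times even when started inside $N$, and this is precisely what upgrades the ``initial laws on $\tilde E$'' content of quasi-absorption to the ``all initial laws on $E$'' content of quasi-polarity.
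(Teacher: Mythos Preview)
Your reduction via Theorem~\ref{THM31} is exactly right, and the strategy of proving ``$U$-negligible $+$ $\tilde E$ quasi-absorbing $\Rightarrow N$ quasi-polar'' is natural. However, the last step of your argument does not establish quasi-polarity. You exhibit a single set $C:=\bigcap_n\theta_{t_n}^{-1}B\in\cF^u$ with $\{X_s\in\tilde E,\forall s>0\}\subset C$ and $\bP^\mu(C)=1$. But $\bP^\mu_*\{X_s\in\tilde E,\forall s>0\}=1$ means that \emph{every} $\cF^u$-set containing $\{X_s\in\tilde E,\forall s>0\}$ has full $\bP^\mu$-measure; producing one such cover is vacuous (take $C=\Omega$). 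Concretely, your $C$ equals $\{X_s\in\tilde E,\forall s\in S\}$ for a countable $S\subset(0,\infty)$, and since $N$ is merely universally measurable (so $\tilde E$ is neither open nor closed, not even finely closed---indeed $N$ contains no nonempty finely open set), right continuity of $t\mapsto X_t$ cannot upgrade ``$X_s\in\tilde E$ for $s\in S$'' to ``$X_s\in\tilde E$ for all $s>0$''. Thus $C\supsetneq\{X_s\in\tilde E,\forall s>0\}$ in general, and the gap is genuine. You correctly flag in your closing paragraph that one must pass to measurable supersets before invoking Markov, but the Markov step must then be carried out for an \emph{arbitrary} such superset $G$, and an arbitrary $G\in\cF^u$ with $G\supset\{X_s\in\tilde E,\forall s>0\}$ has no reason to be of the form $\theta_{t_n}^{-1}(\cdot)$.

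The paper does not attempt this elementary route. Instead it uses the assumed right process $\tilde X$ on $\tilde E$ and shows, via the Ray--Knight method, that $X$ and $\tilde X$ share the \emph{same} Ray--Knight compactification $(\bar E,\bar\rho,\bar U^\alpha)$. The key place where $U$-negligibility (as opposed to weak $U$-negligibility) enters is the equality $\sup_E|g_n|=\sup_{\tilde E}|g_n|$ for every function $g_n$ in the rational Ray cone: since $N$ contains no nonempty finely open set, $\tilde E$ is finely dense, and each $g_n$ is finely continuous. With the compactifications identified, one has $E\subset \tilde E_D$, the non-branching Ray space for $\tilde X$, and the general fact \cite[(39.15)]{Sh88} that $\tilde E_D\setminus\tilde E$ is quasi-polar for the right process on $\tilde E_D$ then transports (via the trace/outer-measure machinery of \cite[(12.30), (A1)]{Sh88}) to quasi-polarity of $N=E\setminus\tilde E$ for $X$. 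The paper's Corollary recording ``$N$ quasi-polar $\Leftrightarrow \tilde E$ quasi-absorbing'' is obtained only \emph{after} this Ray--Knight argument, not by a direct outer-measure computation.
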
}
\begin{proof}
{The equivalences between (i),  (ii) and (iii) have been obtained in Theorem~\ref{THM31}.  
Since the complement of a quasi-polar set is quasi-absorbing,  (iv) is stronger than the other three due to Theorem~\ref{THM31}.  We only need to show that (i) implies (iv). } Let
\[
	\tilde X=(\tilde \Omega, \tilde \cF,\tilde \cF_t,\tilde X_t,\tilde \theta_t, \tilde \bP^x)
\]
be the augmented canonical realization of $\tilde{P}_t$ relative to the subspace topology.  We will adopt a well-known \emph{Ray-Knight method} to complete the proof;  see \cite{Ge75} and \cite[Chapter V]{Sh88} for more details about this method.  A brief summary is also reviewed in Appendix~\ref{APPB} for readers' convenience.  

Take a pre-Ray class $\cC\subset pC_d(E)$ for the right process $X$ (see Definition~\ref{DEFB1}).  The rational Ray cone generated by $(U^\alpha)$ and $\cC$ is denoted by $\cR$.  It is straightforward to verify that $\tilde\cC:=\{f|_{\tilde{E}}: f\in \cC\}$ satisfies all conditions in Definition~\ref{DEFB1} with $\tilde{E}$ in place of $E$.  Hence $\tilde{\cC}$ is a pre-Ray class for the right process $\tilde{X}$.  Denote by $\tilde{\cR}$ the rational Ray cone generated by $(\tilde{U}^\alpha)$ and $\tilde{\cC}$.  Checking the construction procedures of rational Ray cone after Definition~\ref{DEFB1} and noting that $N$ is $U$-negligible,  one may easily obtain $\tilde{\cR}=\{f|_{\tilde{E}}:f\in\cR\}$.  Certainly both $\cR$ and $\tilde{\cR}$ are countable.  Hence we may write 
\[
	\cR=\{g_n:n\geq 1\},\quad \tilde{\cR}=\{\tilde{g}_n: n\geq 1\},
\]
where $\tilde{g}_n=g_n|_{\tilde{E}}$. 
 We argue that 
 \begin{equation}\label{eq:32-2}
 \|g_n\|:=\sup_{x\in E}|g_n(x)|=\sup_{x\in \tilde{E}}|\tilde{g}_n(x)|=:\|\tilde{g}_n\|.
 \end{equation}  
 Let $\tau_f$ be the fine topology on $E$ relative to $X$.  Note that $g_n\in \cR\subset \cup_{\alpha>0}b\cS^\alpha$.  Hence $g_n$ is $\tau_f$-continuous.  The assumption that $N$ is $U$-negligible implies that $N$ contains no non-empty finely open subsets.  In particular,  for any $x\in N$,  there is a sequence $x_k\in \tilde{E}$ such that $x_k\rightarrow x$ as $k\rightarrow \infty$ in $\tau_f$.  Since $g_n(x_k)\rightarrow g_n(x)$ as $k\rightarrow\infty$,  we can obtain that $\|g_n\|=\sup_{x\in \tilde{E}}|g_n(x)|=\|\tilde{g}_n\|$,  as arrives at \eqref{eq:32-2}.  Then examining the construction of Ray-Knight compactification in Appendix~\ref{APPB} allows us to conclude that $(E, d, U^\alpha)$ with rational Ray cone $\cR$ and $(\tilde{E},d, \tilde{U}^\alpha)$ with rational Ray cone $\tilde{\cR}$ have the same Ray-Knight compactification $(\bar{E},\bar{\rho},\bar{U}^\alpha)$.  
 
 Let $D$ be the set of non-branching points of $\bar{U}^\alpha$ and 
 \[
 	\tilde E_R:=\{x\in \bar{E}: \bar{U}^\alpha(x,\cdot)\text{ is carried by }\tilde{E}\}
 \]
 be the Ray space relative to $\tilde{X}$.  Applying \cite[(17.14)]{Sh88} to $\bar{U}^\alpha$ and $X$,  one gets that for $x\in E$,  $\bar{U}^\alpha(x,\cdot)$ is carried by $E$ and the restriction of $\bar{U}^\alpha(x,\cdot)$  to $E$ is equal to $U^\alpha(x,\cdot)$.  Since $N$ is $U$-negligible,  $U^\alpha(x,\cdot)$,  hence $\bar{U}^\alpha(x,\cdot)$,  is carried by $\tilde{E}$ for $x\in E$.  This yields $\tilde{E}\subset E\subset \tilde{E}_R$.  
 Put $\tilde{E}_D:=D\cap \tilde{E}_R$.  Since $E\subset D$,  it follows that 
 \begin{equation}\label{eq:31}
 \tilde{E}\subset E\subset \tilde{E}_D.  
 \end{equation}
 By virtue of \cite[Theorem~39.15]{Sh88} for $\bar{U}^\alpha$ and $\tilde{X}$,  the restriction of $\bar{U}^\alpha$ (or its Ray semigroup $\bar{P}_t$) to $\tilde{E}_D$ corresponds to a right semigroup,  denoted by $\tilde{Q}_t$,  on $\tilde{E}_D$ and in addition,  $\tilde{E}_D\setminus \tilde{E}$ is quasi-polar for the right process on $\tilde{E}_D$ with transition semigroup $\tilde{Q}_t$.  Let 
 \begin{equation}\label{eq:32}
 \tilde{Y}=(\tilde{W}, \tilde{\cW},\tilde{\cW}_t, \tilde{Y}_t,  \tilde \vartheta_t,  \tilde{\mathbf{Q}}^x)
 \end{equation}
 be the augmented  canonical realization of $\tilde{Q}_t$.  On account of \eqref{eq:31},  both $\tilde{E}_D\setminus E$ and $E\setminus \tilde{E}$ are quasi-polar sets for $\tilde{Y}$.  Particularly,  $E$ is quasi-absorbing for $\tilde{Y}$.  Restricting \eqref{eq:32} to $\tilde{W}^1:=\{\omega\in \tilde{W}: \tilde{Y}_t(\omega)\in E,\forall t\geq 0\}$ and applying \cite[Theorem~(12.30)]{Sh88},  one gets a new right process 
  \begin{equation}\label{eq:35}
 \tilde{Y}^1=(\tilde{W}^1, \tilde{\cW}^1,\tilde{\cW}^1_t, \tilde{Y}^1_t,  \tilde \vartheta^1_t,  \tilde{\mathbf{Q}}_1^x)
 \end{equation}
 on $E$ whose semigroup is the restriction of $\tilde{Q}_t$ to $E$.  The restriction of $\tilde{Q}_t$ to $E$ is actually the restriction of Ray semigroup $\bar{P}_t$ to $E$.  According to \cite[Theorem~(17.16)]{Sh88},  it is exactly equal to $P_t$.  In other words,  $\tilde{Y}^1$ is indeed another realization of $X$.  Take a probability measure $\mu$ on $E$.  
Define
\[
\begin{aligned}
	\Gamma_1&:=\{\omega\in \tilde{W}^1: \tilde{Y}^1_t(\omega)\notin N,\forall t> 0\}  \\
	&=\{\omega\in \tilde{W}: \tilde{Y}_t(\omega)\in  \tilde{E},\forall t> 0, \tilde{Y}_0(\omega)\in E\}=:\Gamma.  
\end{aligned}\]
Denote the outer measures of $(\tilde{W},\tilde{\cW}, \tilde{\mathbf{Q}}^\mu)$ and $(\tilde{W}^1,\tilde{\cW}^1, \tilde{\mathbf{Q}}^\mu_1)$ by $\tilde{\mathbf{Q}}^\mu_*$ and $\tilde{\mathbf{Q}}^\mu_{1*}$.  Since $\tilde{\mathbf Q}^\mu\{\tilde{Y}_0\in E\}=1$ and $\tilde{\mathbf{Q}}^\mu_*\{\tilde{Y}_t(\omega)\in  \tilde{E},\forall t> 0\}=1$ due to that $\tilde{E}_D\setminus \tilde{E}$ is quasi-polar for $\tilde{Y}$,  one has $\tilde{\mathbf{Q}}^\mu_*(\Gamma)=1$.  We argue $\tilde{\mathbf{Q}}^\mu_{1*}(\Gamma_1)=1$.  In fact,  take arbitrary $B^1\in \tilde{\cW}^1$ with $\Gamma_1\subset B^1$.  There is some $B\in \tilde{\cW}$ such that $B^1=B\cap \tilde{W}^1$.  Then $\tilde{\mathbf{Q}}^\mu_*(\Gamma)=1$ and $\Gamma=\Gamma_1\subset B^1\subset B$ imply $\tilde{\mathbf Q}^\mu_1(B^1)=\tilde{\mathbf Q}^\mu(B)=1$.  Hence $\tilde{\mathbf{Q}}^\mu_{1*}(\Gamma_1)=1$ holds true.  

Recall that \eqref{eq:30} is the augmented canonical realization of $(P_t)$,  and we already show that \eqref{eq:35} is another realization of $(P_t)$.  Define a map $\Phi: \tilde{W}^1\rightarrow \Omega$,  as is characterized by $\tilde Y^1_t=X_t\circ \Phi$; see Remark~\ref{RMA4}~(3).  Then $\Phi\in \tilde{\cW}^1/\cF^u$ and $\tilde{\mathbf Q}^\mu_1\circ \Phi^{-1}=\bP^\mu$.  Put
\[
	\Gamma_0:=\{\omega\in \Omega: X_t(\omega)\notin N,\forall t>0\}.  
\]
For any $B\in \cF^u$ with $\Gamma_0\subset B$,  we have $\Gamma_1\subset \Phi^{-1}(B)\in \tilde{\cW}^1$.  Thus $\tilde{\mathbf{Q}}^\mu_{1*}(\Gamma_1)=1$ yields $\bP^\mu(B)=\tilde{\mathbf Q}^\mu_1\circ \Phi^{-1}(B)=1$.  As a result,  $\bP^\mu_*(\Gamma_0)=1$,  as arrives at  the conclusion that $N$ is quasi-polar for $X$.
\end{proof}

This result,  together with Theorem~\ref{THM31},  readily implies the following.

\begin{corollary}
Let $N\in \cE^u$ be $U$-negligible.  Then $N$ is quasi-polar,  if and only if $E\setminus N$ is quasi-absorbing.  
\end{corollary}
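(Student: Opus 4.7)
The plan is to deduce this corollary directly by combining Theorems~\ref{THM31} and \ref{THM32}, observing that both characterize the existence of the same right semigroup on $\tilde{E}:=E\setminus N$, but in terms of different geometric conditions on $N$.

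First I would note that since $N$ is $U$-negligible, it is automatically weakly $U$-negligible by Remark~\ref{RM22}~(2), so both theorems are applicable to the same set $N$. The forward direction (quasi-polar $\Rightarrow$ complement quasi-absorbing) is in fact already contained in Remark~\ref{RM22}~(2) and needs no further argument, so the content of the corollary lies entirely in the converse.

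For the converse, I would proceed as follows. Assume $\tilde{E}$ is quasi-absorbing for $X$. By the implication (iv)$\Rightarrow$(i) of Theorem~\ref{THM31} (applied with the weakly $U$-negligible set $N$), there exists a right semigroup $(\tilde{P}_t)_{t\geq 0}$ on $\tilde{E}$ relative to the subspace topology whose resolvent is $(\tilde{U}^\alpha)_{\alpha>0}$. This is precisely condition (i) of Theorem~\ref{THM32}, which applies because $N$ is $U$-negligible. Invoking the implication (i)$\Rightarrow$(iv) of Theorem~\ref{THM32} then yields that $N$ is quasi-polar for $X$, as desired.

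There is no substantive obstacle here; the work has already been carried out in the two theorems. The essential point worth emphasizing is that condition (i) appears verbatim in both Theorem~\ref{THM31} and Theorem~\ref{THM32}, so under the stronger hypothesis that $N$ is $U$-negligible (which makes both theorems simultaneously available), the two geometric characterizations (iv) — quasi-absorbingness of $\tilde{E}$ and quasi-polarity of $N$ — must coincide.
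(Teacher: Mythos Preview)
Your proposal is correct and follows precisely the route indicated by the paper, which simply remarks that the corollary is an immediate consequence of Theorems~\ref{THM31} and~\ref{THM32}. You have filled in the details of that deduction accurately, including the observation that the forward implication is already recorded in Remark~\ref{RM22}~(2).
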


We close this section with two examples in terms of non-Borel universally measurable subsets of $\bR^n$.  

\begin{example}\label{EXA34}
Let $X$ be the Brownian motion on $\bR^n$ with $n\geq 2$ and $U^\alpha$ be its resolvent.  Salisbury \cite{Sa87} constructed a universally measurable set $Z\subset \bR^n$ such that $Z$ is of zero Lebesgue measure,  every non-constant continuous path hits $Z$ and every Borel subset of $Z$ is polar. Obviously $Z$ is $U$-negligible and $Z$ is not quasi-polar.  ($\bR^n\setminus Z$ is not quasi-absorbing either.)  Hence there is no right process on $\bR^n\setminus Z$ whose resolvent is the restriction of $U^\alpha$ to $\bR^n\setminus Z$.  (There also exists no transition function satisfying (HD1) whose resolvent is this restriction if $\bR^n\setminus Z$ is endowed with the subspace topology.)

In addition,  Salisbury \cite{Sa87} also constructed another universally measurable set $Z'\subset \bR^n$ for $n\geq 4$ such that it is of zero Lebesgue measure and the $\bP^\mu$-outer measures of both $\{X_t\in Z': \exists t>0\}$ and $\{X_t\notin Z': \forall t>0\}$ are equal to $1$ for any probability measure $\mu$ on $\bR^n$.  In particular,  $Z'$ is quasi-polar (hence any Borel set contained in $Z'$ is polar),  while any Borel set containing $Z'$ is not polar (in fact,  if $B\in \mathcal{B}(\bR^n)$ with $Z'\subset B$,  then $\bP^\mu(T_B<\infty)=1$ where $T_B:=\inf\{t>0: X_t\in B\}$).  As a result,  there is a right process on $\bR^n\setminus Z'$ whose resolvent is the restriction of $U^\alpha$ to $\bR^n\setminus Z'$.  
\end{example}

\section{Restriction of semi-Dirichlet form associated to a right process}

Let $X=(\Omega, \cF,\cF_t,X_t,\theta_t,\mathbf{P}^x)$ be a right process on a Radon space $E$ whose right semigroup is $(P_t)$ and whose resolvent is $(U^\alpha)$.  Write $U$ for $U^\alpha$ with $\alpha=0$.  We will regard the ceremony $\Delta$ as a point outside $E$ and hence the lifetime $\zeta=\inf\{t>0:X_t=\Delta\}$ should be also attaching to $X$.  In this case $(P_t)$ is,  in general,  sub-Markov on $(E,\cE^u)$,  every function on $E$ is automatically extended to $E_\Delta:=E\cup \{\Delta\}$ by setting $f(\Delta)=0$,  and $X$ is called a right process on $E$ with lifetime $\zeta$ and transition semigroup $(P_t)$; see Appendix~\ref{APPA8}.  Furthermore, the definition of a quasi-absorbing set needs to be adjusted by considering the time before $\zeta$. Specifically, a set $F\in \mathcal{E}^u$ is called quasi-absorbing if, for any initial law $\mu$ supported on $F$, the set $\{X_t\in F, \forall 0\leq t<\zeta\}$ has full $\mathbf{P}^\mu$-outer measure. Equivalently, $F\cup \{\Delta\}$ is quasi-absorbing for the Markov extension of $(P_t)$ to $E_\Delta$. (The definition of quasi-polar sets remains unchanged.)

In this section, we will introduce two additional conditions on $X$, which are adapted from Fitzsimmons \cite{Fi01}.

\begin{hypothesis}
The following hypotheses are assumed to hold true for $X$:
\begin{itemize}
%\item[(H1)] (Transience.)  There is a strictly positive function $g\in b\cE^u$ such that $Ug$ is everywhere finite.  
\item[(H1)] (Sector condition.) There is a $\sigma$-finite positive measure $\fm$ on $(E,\cE)$ such that for any $t\geq 0$,  the restriction of $P_t$ to $L^2(E,\fm)\cap b\cE$ extends uniquely to a contraction operating in $L^2(E,\fm)$ and its infinitesimal generator $Lf:=\lim_{t\rightarrow 0}(P_tf-f)/t$ with domain $D(L)$ (i.e.  the class of functions $f\in L^2(E,\fm)$ for which the indicated limit exists in the strong sense in $L^2(E,\fm)$) satisfies the sector condition: There is a finite constant $K\geq 1$ such that the bilinear form $\sE(f,g):=(f,-Lg)$ with $f,g\in D(L)$ satisfies the estimate
\[
	|\sE(f,g)|\leq K\cdot (f,f-Lf)^{1/2} (g,g-Lg)^{-1/2},\quad \forall f,g\in D(L),
\] 
where $(\cdot,\cdot)$ stands for the inner product of $L^2(E,\fm)$.  
\item[(H2)] ($\fm$-tightness.) There is an increasing sequence $(K_n)$ of compact subsets of $E$ such that 
\[
\lim_{n\rightarrow \infty}\bP^\fm\{T_{E\setminus K_n}<\zeta\}=0,
\]
where the notation $\bP^m$ means that for $\fm$-a.e.  $x\in E$,  the $\bP^x$-measure of this set is $0$,  and $T_{E\setminus K_n}:=\inf\{t>0:X_t\in E\setminus K_n\}$.  
\end{itemize}
\end{hypothesis}
\begin{remark}
Fitzsimmons \cite{Fi01} also included the assumption of transience, which states that there exists a strictly positive function $g\in b\cE^u$ such that $Ug$ is finite everywhere. However, this assumption can be dropped because if $X$ is not transient, we can make it transient by introducing an exponential killing rate, thereby ensuring that all results established in \cite{Fi01} apply to the killed process, and the corresponding results for the original process can be easily derived as well; see \cite[Remark~2.8~(b)]{Fi01}. The hypothesis (H2) is replaced by a stronger one in \cite{Fi01}: $E$ is a metrizable \emph{co-Souslin space}.  For convenience we take the current version mentioned in \cite[(2.1)']{Fi01}. 
\end{remark}

Assuming hypothesis (H1), let $\sF$ be the completion of $D(L)$ with respect to the norm $\mathbf{e}(f) := (f,f-Lf)^{1/2}$. Then $(\sE,\sF)$ constitutes a semi-Dirichlet form on $L^2(E,\fm)$ as defined in \cite{MOR95}. The main result of \cite{Fi01} further demonstrates that under the conditions (H1) and (H2), the right process $X$ is necessarily $\fm$-special and $\fm$-standard. Additionally, it implies that $(\sE,\sF)$ is quasi-regular according to \cite{MOR95}.

\begin{theorem}[Fitzsimmons \cite{Fi01}]\label{THM43}
Let $X$ be a right process on a Radon space $E$ such that (H1) and (H2) hold.  Then its semi-Dirichlet form $(\sE,\sF)$ on $L^2(E,\fm)$ is quasi-regular,  and $X$ is properly associated with $(\sE,\sF)$ in the sense that $P_t f$ is quasi-continuous for each $f\in bp\cE\cap L^2(E,\fm)$.  
\end{theorem}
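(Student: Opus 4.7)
The plan is to verify the three defining conditions of quasi-regularity for the semi-Dirichlet form $(\sE,\sF)$ on $L^2(E,\fm)$ — existence of an $\sE$-nest of compacts, quasi-continuity of a dense subset of $\sF$, and the existence of a countable quasi-continuous family separating points off an exceptional set — by translating (H2) and the right-continuity of $X$ into analytic capacity statements, and then to read off proper association as a consequence of the same quasi-continuity argument applied to $P_tf$.

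First, from (H1) the closed form is constructed by the standard route: $\mathbf{e}(\cdot)^{1/2}$ is a norm on $D(L)$ thanks to the sector estimate with constant $K$, and $\sF:=\overline{D(L)}^{\mathbf{e}}$ carries a continuous extension of $\sE$ that is coercive and Markovian, hence semi-Dirichlet in the sense of \cite{MOR95}; the resolvent of $X$ restricted to $L^2(E,\fm)$ coincides with the analytic resolvent of the generator extension. With this in hand, introduce the $1$-capacity $\mathrm{Cap}$ associated with $(\sE_1,\sF)$ in the usual non-symmetric fashion via equilibrium potentials.

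Next I would produce the nest from (H2). The compacts $(K_n)$ are candidates; the claim is $\mathrm{Cap}(E\setminus K_n)\to 0$. The key identity is that the $1$-equilibrium potential $e^1_{E\setminus K_n}$ of $E\setminus K_n$ agrees $\fm$-a.e.\ with the $\alpha=1$ hitting potential $x\mapsto \bP^x(e^{-T_{E\setminus K_n}}\,;\,T_{E\setminus K_n}<\zeta)$, so that
\[
   \mathrm{Cap}(E\setminus K_n)\;\leq\;K\cdot \mathbf{e}\bigl(e^1_{E\setminus K_n}\bigr)\;\lesssim\; \bP^\fm\bigl(T_{E\setminus K_n}<\zeta\bigr)^{1/2},
\]
and (H2) then drives the right-hand side to $0$. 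Passing between the form norm and the hitting-probability estimate is the main technical obstacle: in the non-symmetric setting one must use the sector constant $K$ together with the Cauchy–Schwarz-type inequality of (H1) to control $\mathbf{e}(e^1_{E\setminus K_n})$ by $(e^1_{E\setminus K_n},\,1_{E\setminus K_n})_\fm$, whereas in the symmetric case this is automatic.

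With the nest in place, condition (QR2) is obtained from the right-process structure: for $f\in bp\cE\cap L^2(E,\fm)$ the $\alpha$-resolvent $U^\alpha f\in\cS^\alpha$, so by (HD2) the map $t\mapsto U^\alpha f(X_t)$ is $\bP^x$-a.s.\ right continuous, hence $U^\alpha f$ is finely continuous; combined with the compact nest $(K_n)$ and the usual Choquet continuity-from-above for $\mathrm{Cap}$ this upgrades to $\sE$-quasi-continuity. Taking a countable $\sE_1$-dense family of such $U^\alpha f$ — with $f$ ranging over a countable subset of $C_d(E)\cap L^2(E,\fm)$ separating points of $E$ and $\alpha\in\bQ^+$ — yields (QR3). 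For proper association, the same recipe applies directly to $P_tf$: since $P_tf$ is itself $\fm$-a.e.\ equal to a finely continuous (in fact $\alpha$-excessive up to $e^{\alpha t}$ corrections via $P_tf = e^{\alpha t}(U^\alpha(\alpha f) - U^\alpha(P_\cdot f)$-type decompositions) representative, the nest produced above certifies that $P_tf$ is $\sE$-quasi-continuous, completing the theorem.
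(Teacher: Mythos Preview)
The paper does not prove this statement at all: Theorem~\ref{THM43} is quoted verbatim from Fitzsimmons \cite{Fi01} and used as a black box, so there is no in-paper argument to compare against. Your sketch is therefore not ``the same approach'' nor ``a different approach'' --- it is an attempted reconstruction of a result the authors chose to cite.

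As a reconstruction it has a genuine circularity. Your capacity estimate hinges on the identity between the analytic $1$-equilibrium potential $e^1_{E\setminus K_n}$ and the probabilistic hitting functional $x\mapsto \bP^x\bigl(e^{-T_{E\setminus K_n}};\,T_{E\setminus K_n}<\zeta\bigr)$. That identification is precisely the content of the analytic--probabilistic correspondence (proper association) that the theorem is asserting; it is not available before quasi-regularity has been established. Separately, $\fm$ is only $\sigma$-finite, and (H2) as stated in the paper is a pointwise $\fm$-a.e.\ condition, not an integrated one --- the quantity $\bP^\fm(T_{E\setminus K_n}<\zeta)$ in your displayed inequality need not be finite, so the bound is not meaningful as written. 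Finally, the step ``fine continuity of $U^\alpha f$ plus a compact nest upgrades to $\sE$-quasi-continuity'' is asserted, not argued; this is exactly where the work lies, and the $P_tf$ decomposition you write at the end is not a valid identity ($P_tf$ is not $\alpha$-excessive in general). Fitzsimmons's actual route avoids the circularity by first using a Ray--Knight compactification to show that $X$ is $\fm$-special and $\fm$-standard; only once the process is known to be special can exceptional sets be matched with $\fm$-polar sets, after which (H2) legitimately yields an $\sE$-nest and fine continuity can be promoted to quasi-continuity.
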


We will refrain from restating the definitions of (semi-)Dirichlet form,  quasi-regularity or quasi-notions relative to a (semi-)Dirichlet form.  The readers can refer to the literatures such as \cite{Fi01,  MOR95,  MR92}.  Quasi-regularity holds significant importance as it serves as both a sufficient and necessary condition for a (semi-)Dirichlet form to be associated with a certain \emph{standard} Markov process.

The objective of this section is to investigate whether $(\sE,\sF)$ remains quasi-regular when it is defined on $L^2(E\setminus N,\fm)$,  where $N\subset E$ is an $\fm$-negligible set,  i.e.  $\fm(N)=0$.  It is important to note that $(\sE,\sF)$ still constitutes a semi-Dirichlet form on $L^2(E\setminus N,\fm)$, as the semi-Dirichlet form is defined based on $\fm$-equivalence classes. Furthermore, it is worth mentioning that $N$ belongs to $\bar{\cE}^\fm\supset \cE^u$, where $\bar{\cE}^\fm$ represents the completion of $\cE$ with respect to $\fm$,  and $N$ is not necessarily universally measurable in general.

Let us first examine the restriction of $U^\alpha$ outside a weakly $U$-negligible set $N\in \cE^u$ with $\fm(N)=0$.  Thanks to Theorem~\ref{THM31},  this restriction corresponds to a right process $\tilde{X}$ whenever $E\setminus N$ is quasi-absorbing.  Meanwhile the restricted right process still satisfies (H1).  However, in order to ensure the quasi-regularity of its semi-Dirichlet form, we also require $\fm$-tightness for $\tilde{X}$, as indicated by Theorem~\ref{THM43}.  It is important to note that $\fm$-tightness on a process implies that the process remains within a Lusin space consisting of an increasing sequence of compact subsets. Consequently, $\tilde{X}$ becomes a Borel right process on a smaller absorbing set, which is the complement of a larger Borel measurable negligible set. This particular case is the only scenario where quasi-regularity of the semi-Dirichlet form can be obtained through restriction, as illustrated in Theorem~\ref{THM44}.

%Here's a quick note to mention the opposite operation.  Le  $\bar{E}$ be a Hausdorff space with $E\subset \bar{E}$ and such that $E$ inherits the subspace topology of $\bar{E}$.  (We do not need to assume that $\bar{E}$ is Radonian.) Extend $\fm$ to $\bar{E}$ by letting $\fm(\bar{E}\setminus E)=0$.  Recall that an $\sE$-exceptional set $B\subset \bar{E}$ means $B\subset \bar E\setminus \cup_{n\geq 1} \bar F_n$ for some $\sE$-nest $\{\bar{F}_n:n\geq 1\}$.  Then we have the following.

%\begin{lemma}
%Assume that $(\sE,\sF)$ is quasi-regular on $L^2(E,\fm)$.  Then it is also quasi-regular on $L^2(\bar{E},\fm)$,  and $\bar{E}\setminus E$ is $\sE$-exceptional.  
%\end{lemma}
%\begin{proof}
%Write $(\bar{\sE},\bar{\sF})$ for $(\sE,\sF)$ if it is put on $L^2(\bar{E},\fm)$.  Note that every compact subset of $E$ is also compact and hence  closed in $\bar{E}$.  As a result,  an $\sE$-nest of compact subsets of $E$ is also an $\bar{\sE}$-nest.  Particularly,  $\bar{E}\setminus E$ is $\bar{\sE}$-exceptional.  Given a family of countably many $\sE$-quasi-continuous functions $\{f_k\}$,  there is an $\sE$-nest $\{K_n:n\geq 1\}$ of compact subsets of $E$ such that $f_k|_{K_n}$ is continuous for every $k$;  see \cite[Proposition~2.16]{MOR95}.  Since $\{K_n:n\geq 1\}$ is also an $\bar{\sE}$-nest,  it follows that $f_k$ is also $\bar{\sE}$-continuous.  Eventually one can easily obtain the quasi-regularity of $(\bar{\sE},\bar{\sF})$.  
%\end{proof}

A set $B\in \cE^e$, the $\sigma$-algebra generated by all ($\alpha$-)excessive functions,  is called \emph{$\fm$-inessential},  if $\fm(B)=0$ and $E\setminus B$ is (quasi-)absorbing for $X$ (see,  e.g.,  \cite[Definition~3.16~(a)]{Fi01}).  Now we have a position to present the main result of this section.  

\begin{theorem}\label{THM44}
Let $X$ be a right process on a Radon space $E$ such that (H1) and (H2) hold,  whose quasi-regular semi-Dirichlet form on $L^2(E,\fm)$ is $(\sE,\sF)$.  Let $N\subset E$ be an $\fm$-negligible set and $\tilde{E}:=E\setminus N$ endowed with the subspace topology of $E$.  Then $(\sE,\sF)$ is quasi-regular on $L^2(\tilde{E},\fm)$,  if and only if $N\subset B$ for some $\fm$-inessential set $B\in \cE$.    
\end{theorem}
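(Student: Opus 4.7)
The plan is to prove the two implications separately, relying on Theorem~\ref{THM43}, Theorem~\ref{THM31}, and the Ma-R\"ockner-style uniqueness of the process associated with a quasi-regular semi-Dirichlet form.

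For sufficiency, fix an $\fm$-inessential $B\in\cE$ with $N\subset B$. Since $E\setminus B$ is quasi-absorbing for $X$ and $\fm(B)=0$, Theorem~\ref{THM31} produces a right process $\tilde{X}$ on $E\setminus B$ (with the subspace topology) whose transition semigroup is the restriction of $(P_t)$. The relation $\fm(B)=\fm(N)=0$ canonically identifies $L^2(E,\fm)$, $L^2(E\setminus B,\fm)$ and $L^2(\tilde{E},\fm)$, so $(\sE,\sF)$ remains the semi-Dirichlet form of $\tilde{X}$ and (H1) passes to $\tilde{X}$ automatically. For (H2), the quasi-regularity of $(\sE,\sF)$ on $L^2(E,\fm)$ supplies an $\sE$-nest of compact subsets of $E$ which, by a standard refinement exploiting the $\sE$-exceptionality of $B$, may be taken inside $E\setminus B$; this gives $\fm$-tightness of $\tilde{X}$. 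Theorem~\ref{THM43} then yields quasi-regularity of $(\sE,\sF)$ on $L^2(E\setminus B,\fm)$, and since compact subsets of $E\setminus B$ remain compact in $\tilde{E}$ (both topologies being inherited from $E$), this passes to quasi-regularity on $L^2(\tilde{E},\fm)$.

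For necessity, assume $(\sE,\sF)$ is quasi-regular on $L^2(\tilde{E},\fm)$. By the Ma-R\"ockner correspondence (essentially the converse direction of Theorem~\ref{THM43}), the form is properly associated with an $\fm$-tight special standard process $\tilde{X}$ on $\tilde{E}$, and quasi-regularity provides an $\sE$-nest $(\tilde{K}_n)$ of compact subsets of $\tilde{E}$. Because $\tilde{E}$ carries the subspace topology of $E$, each $\tilde{K}_n$ is compact in $E$, and $F:=\bigcup_n\tilde{K}_n$ is an $F_\sigma$ subset of $E$. Hence $B_0:=E\setminus F\in\cE$ contains $N$, and $\fm(B_0)=0$ by $\fm(N)=0$ together with the nest property $\fm(\tilde{E}\setminus F)=0$. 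What remains is to enlarge $B_0$ to a Borel $\fm$-inessential superset of $N$, i.e.\ to exhibit quasi-absorption of (a refinement of) $F$ for $X$.

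The key step, and the main obstacle, is this pointwise comparison of $X$ and $\tilde{X}$. Both are properly associated with the same form on the same $L^2$ space, so their semigroups agree $\fm$-a.e.; however, quasi-absorption of $F$ for $X$ is a pointwise statement. Fitzsimmons' structural content inside Theorem~\ref{THM43} furnishes $\fm$-inessential Borel sets $B_X\subset E$ and $\tilde{B}\subset\tilde{E}$ outside which $X$ and $\tilde{X}$ restrict to Borel right processes; on the common Borel subset $G:=(E\setminus B_X)\cap(\tilde{E}\setminus\tilde{B})$ the two Borel right processes have identical $L^2$ resolvents and, after at most an $\fm$-null Borel enlargement, they coincide pointwise by the standard rigidity of Borel right processes determined by their resolvents. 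Setting $B:=B_0\cup B_X\cup\tilde{B}$ (still Borel, $\fm$-null, and containing $N$), the $\fm$-tightness of $\tilde{X}$ with respect to $(\tilde{K}_n)$ transfers pointwise to $X$ on $G$, so $E\setminus B$ is quasi-absorbing for $X$ and $B$ is the desired $\fm$-inessential Borel set.
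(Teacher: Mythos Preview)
Your sufficiency argument is correct, though more elaborate than needed: the paper simply observes that a Borel $\fm$-inessential set $B$ is $\fm$-polar, hence $N\subset B$ is $\sE$-exceptional, and quasi-regularity on $L^2(\tilde{E},\fm)$ follows immediately.

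In the necessity direction your outline is right in spirit, and your use of the compact nest $(\tilde{K}_n)$ to manufacture a Borel set $F\in\cE$ with $N\subset E\setminus F$ is a legitimate alternative to the paper's route via Lusin's theorem. However, the final step contains a genuine gap. The inference ``$\fm$-tightness of $\tilde{X}$ transfers pointwise to $X$ on $G$, so $E\setminus B$ is quasi-absorbing'' is not valid as stated: tightness is strictly weaker than quasi-absorption, and in any case your set $G=(E\setminus B_X)\cap(\tilde{E}\setminus\tilde{B})$ has not been arranged to be absorbing for $\tilde{X}$ (there is no reason $E\setminus B_X$ should be absorbing for $\tilde{X}$), so even the transfer of path properties from $\tilde{X}$ to $X$ on $G$ is unjustified. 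The correct closing mechanism is Theorem~\ref{THM31} applied to $X$: one must exhibit a set $\tilde{E}_2\in\cE$ which is \emph{absorbing for $\tilde{X}$} and on which the two resolvents coincide pointwise; then the restriction of $(U^\alpha)$ to $\tilde{E}_2$ is the resolvent of a right process (namely $\tilde{X}|_{\tilde{E}_2}$), and Theorem~\ref{THM31} gives that $\tilde{E}_2$ is quasi-absorbing for $X$. The paper carries this out explicitly: after passing to a Borel right process $\tilde{X}$ on a Lusin (hence $\cE$-measurable) set $\tilde{E}_1$, it compares $U^\alpha f|_{\tilde{E}_1}$ with $\tilde{U}^\alpha(f|_{\tilde{E}_1})$ via their $\tilde{\sE}$-quasi-continuous versions for $f$ ranging over a countable dense family in $C_d(E)$ (this is your ``standard rigidity'' step made precise), absorbs the countably many exceptional sets into a single $\fm$-inessential set $\tilde{B}_1$ for $\tilde{X}$, and takes $\tilde{E}_2:=\tilde{E}_1\setminus\tilde{B}_1$. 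Your argument can be repaired along the same lines, but not by invoking tightness.
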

\begin{proof}
The sufficiency is trivial because in this case $B$ is \emph{$\fm$-polar} and hence $N$ is $\sE$-exceptional.  

We argue the necessity and the idea of the proof is through the line of \cite[\S2.3]{BCR22}.  Suppose that $(\sE,\sF)$ is quasi-regular on $L^2(\tilde{E},\fm)$.  Write $(\tilde{\sE},\tilde{\sF})$ for $(\sE,\sF)$ if it is put on  $L^2(\tilde{E},\fm)$.  Denote by $\tilde{\cE}=\cE|_{\tilde{E}}$ the Borel $\sigma$-algebra on $\tilde{E}$.  (Attention that $\tilde{B}\in \tilde{\cE}$ is not necessarily in $\cE$.) On account of \cite[Theorem~3.8]{MOR95},  there is an $\tilde{\sE}$-exceptional set $\tilde{B}\in \tilde{\cE}$ and a Borel right process $\tilde{X}$ on the Lusin topological space $\tilde{E}_1:=\tilde{E}\setminus \tilde{B}$ properly associated with $(\tilde{\sE},\tilde{\sF})$.  The Borel $\sigma$-algebra on $\tilde{E}_1$ is $\tilde{\cE}_1=\tilde{\cE}|_{\tilde{E}_1}=\cE|_{\tilde{E}_1}$.  Denote by $\tilde{P}_t$ and $\tilde{U}^\alpha$ the transition semigroup and resolvent of $\tilde{X}$ on $(\tilde{E}_1,\tilde{\cE}_1)$.   The embedding map $\tilde{i}: \tilde{E}_1\rightarrow E$ is obviously $\tilde{\cE}_1/\cE$ measurable.  Since $\tilde{E}_1$ is Lusin,  it follows from Lusin's theorem (e.g.,  \cite[A2.6]{Sh88}) that $\tilde{E}_1\in \cE$.  Particularly  $\tilde{\cE}_1\subset \cE$.  

Put $(\tilde{\sE},\tilde{\sF})$ on $L^2(\tilde{E}_1,\fm)$ and still denoted it by $(\tilde{\sE},\tilde{\sF})$.  Obviously $(\tilde{\sE},\tilde{\sF})$ remains quasi-regular on $L^2(\tilde{E}_1,\fm)$.  Note that if $\{F_n\subset E:n\geq 1\}$ is an $\sE$-nest,  then $\{\tilde{F}_n:=F_n\cap \tilde{E}_1: n\geq 1\}$ is an $\tilde{\sE}$-nest.  Hence $\tilde{f}:=f|_{\tilde{E}_1}$ is $\tilde{\sE}$-quasi-continuous for every $\sE$-quasi-continuous function $f$ on $E$.  Take a countable dense subset $\cC$ of $C_d(E)$.  \cite[Proposition~3.4]{MOR95} yields $U^\alpha f$ is $\sE$-quasi-continuous for $f\in \cC$, and hence $U^\alpha f|_{\tilde{E}_1}$ is $\tilde{\sE}$-quasi-continuous.  Since $U^\alpha f=\tilde{U}^\alpha \tilde{f}$,  $\fm$-a.e.,  where $\tilde{f}:=f|_{\tilde{E}_1}$,  and $\tilde{U}^\alpha \tilde{f}$ is $\tilde{\sE}$-quasi-continuous,  it follows that $U^\alpha f|_{\tilde{E}_1}=\tilde{U}^\alpha \tilde{f}$ outside an $\tilde{\sE}$-exceptional set $\tilde{B}_{\alpha, f}\in \tilde{\cE}_1$.  Let $\tilde{B}_1\in \tilde{\cE}_1$ be an $\fm$-inessential set of $\tilde{X}$ containing the union of the sets $\tilde{B}_{\alpha,f}$ as $\alpha$ varies over all strictly positive rationals and as $f$ varies over $\cC$.  The existence of such $\tilde{B}_1$ is due to,  e.g.,  \cite[Theorem~A.2.15]{CF12}. Set $\tilde{E}_2:=\tilde{E}_1\setminus \tilde{B}_1$ with its Borel $\sigma$-algebra $\tilde{\cE}_2=\cE|_{\tilde{E}_2}$.  Then it is easy to obtain 
\begin{equation}\label{eq:41}
	U^\alpha f|_{\tilde{E}_2}=\tilde{U}^\alpha \tilde{f}|_{\tilde{E}_2},\quad \forall \alpha>0,  f\in C_d(E).  
\end{equation}
Since $\tilde{E}_2$ is absorbing for $\tilde{X}$,  it follows that $\tilde{B}_1$ is weakly $\tilde{U}$-negligible and the restriction $\tilde{V}^\alpha$ of $\tilde{U}^\alpha$ to $\tilde{E}_2$ is the resolvent of a Borel right process on $\tilde{E}_2$.  
A monotone class argument based on \eqref{eq:41} allows us to conclude that $E\setminus \tilde{E}_2$ is weakly $U$-negligible and the restriction of $U^\alpha$ to $\tilde{E}_2\in \cE$ is equal to $\tilde{V}^\alpha$.  Particularly,  the restriction of $U^\alpha$ to $\tilde{E}_2$ corresponds to a right process.  On account of Theorem~\ref{THM31} and  $\tilde{E}_2\in \cE$,  one gets that $\tilde{E}_2$ is absorbing for $X$.  Note that $\fm(E\setminus \tilde{E}_2)=0$.  We eventually have an $\fm$-inessential set $B:=E\setminus \tilde{E}_2\in \cE$ with $N\subset B$.  That completes the proof. 
\end{proof}

This characterization can be applied to build examples of non-quasi-regular (semi-)Dirichlet forms,  as Beznea et al. did in \cite{BCR22}.   In summary,  given a quasi-regular (semi-)Dirichlet form on $L^2(E,\fm)$ and a non-$\sE$-exceptional but $\fm$-negligible set $N\subset E$,  the same (semi-)Dirichlet form on $L^2(E\setminus N,\fm)$ is not quasi-regular.  Below, we provide an example illustrating the existence of a non-tight right process associated with such a non-quasi-regular (semi-)Dirichlet form.

\begin{example}\label{EXA45}
Let $Z'\subset \bR^n$ with $n\geq 4$ be the universally measurable set in Example~\ref{EXA34}.  Then the restriction of Brownian resolvent to $\bR^n\setminus Z'$ corresponds to a right process $\tilde{X}$.  Clearly it is symmetric with respect to the Lebesgue measure $\fm$ and associated with the Dirichlet form $(\frac{1}{2}\mathbf{D}, H^1(\bR^n))$ on $L^2(\bR^n\setminus Z')$,  where $H^1(\bR^n)$ is the Sobolev space of order $1$ over $\bR^n$ and $\mathbf{D}(f,g):=\int_{\bR^n}\nabla f(x) \nabla g(x)dx$.  However,  any Borel set containing $Z'$ is not polar.  Hence Theorem~\ref{THM44} tells us that $(\frac{1}{2}\mathbf{D}, H^1(\bR^n))$  is not quasi-regular on $L^2(\bR^n\setminus Z')$.  In view of Theorem~\ref{THM43},  $\tilde{X}$ is not $\fm$-tight.  
\end{example}

%In this characterization, the exceptional set $B$ is required to possess Borel measurability, as opposed to the less stringent requirement of universal measurability in the preceding sections. The reason for this lies in the fact that the quasi-regularity of the semi-Dirichlet form guarantees the corresponding process to be Borel measurable. 

The choice of the inherited topology in Theorem~\ref{THM44} is indeed crucial. The derivation of formula \eqref{eq:41} relies on the specific setting of the inherited topology.  In contrast, \eqref{eq:41} appears as a fundamental assumption in Theorems \ref{THM31} and \ref{THM32}.  If different topologies are assigned to $\tilde{E}$, then even if $N$ is a non-$\sE$-exceptional set, the semi-Dirichlet form $(\sE,\sF)$ may exhibit quasi-regularity on $L^2(\tilde{E},\fm)$. An interesting example demonstrating this phenomenon is presented in \cite{BOZ06} (specifically, the example before Proposition~3.5). Additionally, the following example, due to \cite{Sc79, LS20}, further illustrates this point.

%We end this section with two examples of non-quasi-regular (semi-)Dirichlet forms.  The first example actually gives an $\fm$-symmetric right process which is not $\fm$-tight.  The second illustrates the necessity of the inherited topology in Theorem~\ref{THM24}. 

\begin{example}\label{EXA46}
Let $\mathbb G:=(-\infty,0_-]\cup [0,\infty)$ consist of two separate intervals,  where $0_-$ is viewed as a distinct zero point from $0$.  The Lebesgue measure on $\mathbb G$ is denoted by $\fm$.  Consider the Dirichlet form 
\[
\begin{aligned}
	&\sF:=\left\{f\in L^2(\mathbb{G}): f|_{(0,\infty)}\in H^1((0,\infty)), f|_{(-\infty, 0_-)}\in H^1((-\infty,  0_-))\right\},\\
	&\sE(f,f):=\frac{1}{2}\int_{\mathbb{G}\setminus \{0_-,0\}} f'(x)^2dx+\frac{\kappa}{4}(f(0)-f(0_-))^2,\quad f\in \sF,
\end{aligned}\]
where $H^1((0,\infty))$ (resp.  $H^1((-\infty,  0_-))$) is the Sobolev space of order $1$ over $(0,\infty)$ (resp.  $(-\infty,  0_-)$),  $f(0)$ (resp.  $f(0_-)$ is the right (resp. left) limit of $f$ at $0$ (resp. $0_-$),  and $\kappa>0$ is a given constant.  Note that $f\in \sF$ is  tacitly continuous on $(0,\infty)$ and $(-\infty, 0_-)$ respectively.  This Dirichlet form is regular (hence quasi-regular) on $L^2(\mathbb{G})$,  and its associated Markov process $X$ is indeed a Feller process on $\mathbb{G}$,  called the \emph{snapping-out Brownian motion} with parameter $\kappa$ (see \cite{L16}).   In addition,  the transition semigoup $P_t$ as well as its resolvent $U^\alpha$ of $X$ is absolutely continuous with respect to $\fm$,  and  every singleton set is not $\sE$-polar.  We refer readers to \cite{LS20} for more details.  

Obviously $\{0_-\}$ is $\fm$-negligible and hence $U$-negligible,  while $\tilde{E}:=\mathbb{G}\setminus \{0-\}$ is not (quasi-)absorbing.  
We will equip $\tilde{E}$ with three kinds of topologies generating the same Borel $\sigma$-algebra identical to $\mathcal{B}(\bR)$ and,  relative to which examine the Dirichlet form $(\sE,\sF)$ on $L^2(\tilde{E},\fm)$.  

The first topology on $\tilde{E}$ is the inherited topology of $\mathbb{G}$.  In this case the restriction $(\tilde{P}_t)$ of $(P_t)$ to $\tilde E$ is still a transition function acting on $L^2(\tilde{E},\fm)$ as a strongly continuous symmetric contraction semigroup; see \cite{Ch22}.  Particularly,  $(\sE,\sF)$ is the Dirichlet form of $(\tilde{P}_t)$ on $L^2(\tilde{E},\fm)$.  Note that $(\tilde{P}_t)$ does not satisfy (HD1) due to Theorem~\ref{THM24},  and $(\sE,\sF)$ is not quasi-regular on $L^2(\tilde{E},\fm)$.  

Next we endow $\tilde{E}$ with the Euclidean topology,  so that it is identical to $\bR$.  Currently the restricted transition function $(\tilde{P}_t)$ on $\bR$ satisfies (HD1),  and one of its realizations can be obtained by mapping $X$ to $\bR$ through  the surjection $\mathbf{r}:\bG\rightarrow \bR$ with $\mathbf{r}(x)=x$ for $x\notin \{0,0_-\}$ and $\mathbf{r}(0)=\mathbf{r}(0_-)=0$;  see \cite[Theorem~4.1]{Sc79}.  The resulting process $\tilde{X}$ is continuous and symmetric,  but does not satisfy the strong Markov property,  i.e.  (HD2) fails for $(\tilde{P}_t)$.  The Dirichlet form of $(\tilde{P}_t)$ on $L^2(\bR)$ is still $(\sE,\sF)$,  which is obviously not quasi-regular.  

Finally we construct a third topology $\tilde \tau$ on $\tilde{E}$ so that $(\sE,\sF)$ is quasi-regular on $L^2(\tilde{E},\fm)$ relative to this topology by mimicking the argument in \cite{BOZ06}.  To do this,  let $\tilde{M}:=\{-1/n: n\geq 1\}$ and consider the bijection $\varphi: \bG\rightarrow \tilde{E}$ defined as $\varphi(x):=x$ for $x\in \tilde{E}\setminus \tilde{M}$,  $\varphi(0_-):=-1$ and $\varphi(-1/n):=-1/(n+1)$ for $n\geq 1$.  Let $\tau$ be the original topology of $\bG$.  Set $\tilde{\tau}:=\varphi(\tau)=\{\varphi(O): O\in \tau\}$,  the biggest topology on $\tilde{E}$ making $\varphi$ continuous.  It is easy to see that $\tilde{\tau}$ is Haursdorff and hence $\varphi: (\bG,  \tau)\rightarrow (\tilde{E},  \tilde{\tau})$ is a homoemorphism.  Every $f\in \sF$ admits a $\tilde{\tau}$-continuous version $\tilde{f}$ with $\tilde{f}(x):=f(x)$ for $x\in \tilde{E}\setminus \tilde{M}$,  $\tilde{f}(-1):=f(0_-)$ and $\tilde{f}(-1/(n+1)):=f(-1/n)$ for $n\geq 1$,  and hence 
\[
	\sE(f,f)=\frac{1}{2}\int_{\tilde{E}\setminus \tilde{M}}\tilde{f}'(x)^2dx+\frac{\kappa}{4}(\tilde{f}(0)-\tilde{f}(-1))^2.  
\]
Obviously $(\sE,\sF)$ is regular (thus quasi-regular) on $L^2(\tilde{E},\fm)$ relative to the topology $\tilde{\tau}$,  whose associated Markov process is the homeomorphic image of $X$ under $\varphi$.  Denote by $(\tilde{P}^{\tilde{\tau}}_t)$ its associated transition semigroup.  Then $\tilde{P}^{\tilde{\tau}}_t$ is identical with $P_t(\varphi^{-1}(\cdot),  \varphi^{-1}(\cdot))$.  Particularly,  it is not the same as the restriction of $(P_t)$ to $\tilde{E}$.  
\end{example}

\appendix

\section{Basics of right processes}\label{APPA}

\subsection{Radon space}
Let $E$ be a Radon topological space,  i.e.  it is homeomorphic to a universally measurable subset of a compact metric space.  The Borel $\sigma$-algebra on $E$ is denoted by  $\mathcal{B}(E)$,  and the universal completion of $\mathcal{B}(E)$ is denoted by $\mathcal{B}^u(E)$,  i.e. 
\[
	\cB^u(E)=\cap_{\mu} \overline{\cB(E)}^\mu,
\] 
where $\overline{\cB(E)}^\mu$ is the completion of $\cB(E)$ with respect to $\mu$ running over all finite positive measures on $(E,\mathcal{B}(E))$.  Note that every finite measure on $(E,\mathcal{B}(E))$ extends in a unique way to a measure on $(E,\mathcal{B}^u(E))$,  and every finite measure on $(E,\mathcal{B}^u(E))$ is the unique extension of its restriction to $\cB(E)$; see,  e.g.,  \cite[(A1.1)]{Sh88}.  Hence we would also call $\mu$ just a measure on $E$ if no confusions caused.  The simpler notations $\cE$ and $\cE^u$ in place of $\cB(E)$ and $\cB^u(E)$ will be used unless clarity dictates otherwise.  

Suppose $E$ is embedded into the compact metric space $\hat{E}$ with the metric $d$ compatible to the topology on $E$,  i.e.  the subspace topology of $E$ relative to $(\hat{E}, d)$ is identical to the original topology of $E$.  Note that $\cE=\cB(\hat{E})|_E:=\{E\cap \hat B: \hat{B}\in \cB(\hat{E})\}$ and $\cE^u=\cB^u(\hat{E})|_E:=\{E\cap \hat B: \hat{B}\in \cB^u(\hat{E})\}$; see,  e.g.,  \cite[(A2.2)]{Sh88}.
 We set $C(E)=C(E,d)$ to be the family of all real continuous functions on $E$ and $C_d(E)$ to be the family of all real $d$-uniformly continuous functions on $E$.  Then $C_d(E)$ is a separable space with respect to the uniform norm while $C(E)$ may be not.

\subsection{Filtered measurable space}

Given a $\sigma$-algebra $\mathcal{M}$ on a space $M$,  $b\mathcal{M}$ (resp.  $p\mathcal{M})$) stands for the class of bounded (resp.  $[0,\infty]$-valued) $\mathcal{M}$-measurable functions on $M$.  Given two measurable spaces $(M_1,\cM_1)$ and $(M_2,\cM_2)$.  A map $f: M_1\rightarrow M_2$ is measurable,  denoted by $f\in \cM_1/\cM_2$, if $f^{-1}(B_2)\in \cM_1$ for any $B_2\in \cM_2$.  

Let $(\Omega, \cG, \bP)$ be a probability space and $(X_t)_{t\geq 0}$ be a stochastic process with values in $E$.  That is,  $X_t$,  $t\geq 0$,  is a collection of measurable maps from $(\Omega,  \cG)$ to $(E,\cE)$.  To emphasize the dependence on $\cE$,  we call it an $\cE$-stochastic process.  Similar definitions will apply when $\cE$ is replaced by larger $\sigma$-algebra $\cE^\bullet$,  e.g.,  $\cE^u$.  In the current paper a stochastic process tacitly means an $\cE^u$-stochastic process.  It is required in this case that for any $t\geq 0$,  $\{X_t\in B\}:=\{\omega\in \Omega: X_t(\omega)\in B\}\in \cG$ for every $B\in \cE^u$ rather than every $B\in \cE$.  A filtration $(\cG_t)$ means an increasing family of sub-$\sigma$-algebras of $\cG$,  to which $(X_t)$ is ($\cE^u$-)adapted in the sense that $X_t\in \cG_t/\cE^u$ for $t\geq 0$.  Then the collection $(\Omega,  \cG_t,\cG)$ is called a filtered measurable space.

Corresponding to a fixed $\cE^u$-stochastic process $(X_t)$ on $\Omega$,  the natural $\sigma$-algebra $\cF^u_t$ is defined as $\sigma\left\{f(X_s): 0\leq s\leq t, f\in \cE^u\right\}$ and $\cF^u:=\sigma\left\{f(X_t): t\geq 0, f\in \cE^u\right\}$.  Obviously $\cF^u_t\subset \cG_t$ and $\cF^u\subset \cG$. 

\subsection{Transition function}

Fix a $\sigma$-algebra $\cE^\bullet$ on $E$ with  $\cE\subset \cE^\bullet \subset \cE^u$. 
The notation $K(x,dy)$ is a kernel on $(E,\cE^\bullet)$ provided that,  for all $x\in E$,  $K(x,dy)$ is a positive measure on $(E,\cE^\bullet)$,  and for every $B\in \cE^\bullet$,  $x\mapsto K(x,B)$ is $\cE^\bullet$-measurable.  
It is called a Markov  (sub-Markov) kernel if $K(x,E)=1$ (resp.  $K(x,E)\leq 1$) for all $x\in E$.  Note that $K$ can be always extended to a kernel on $(E,\cE^u)$,  which is still denoted by $K$.  

\begin{definition}\label{DEFA1}
A family $(P_t)_{t\geq 0}$ of Markov kernels on $(E,\cE^\bullet)$ is called a \emph{transition function} on $(E,\cE^\bullet)$ if for all $t,s\geq 0$ and all $x\in E,  B\in \cE^\bullet$,
\begin{equation}\label{eq:A1}
	P_{t+s}(x,B)=\int_E P_t(x,dy)P_s(y,B).
\end{equation}
It is called a \emph{normal} transition function if in addition,  $P_0(x, B)=1_B(x)$.  
\end{definition}

Let $(X_t)$ be  a stochastic process on $(\Omega, \cG,\bP)$ $\cE^\bullet$-adapted to $(\cG_t)$.  It satisfies the \emph{Markov property} relative to a transition semigroup $(P_t)$ on $(E,\cE^\bullet)$ provided that 
\begin{equation}\label{eq:A2}
	\bP\{f(X_{t+s})|\cG_t\}=P_sf(X_t),\quad t,s\geq 0,  f\in b\cE^\bullet.  
\end{equation}
The transition function $(P_t)$ is also called the (Markov) transition semigroup of $(X_t)$ due to the semigroup property \eqref{eq:A1}.  
The distribution $\mu$ of $X_0$ is called the \emph{initial law} of $(X_t)$.  
Clearly $\mu_t=\mu P_t$ is the distribution of $X_t$.  

\subsection{The first regularity hypothesis (HD1)}

Let $(X_t)_{t\geq 0}$ be a stochastic process defined on $(\Omega, \cG,\bP)$ and having valued in a topological space $E$.  It is called \emph{right continuous} in case that every sample path $t\mapsto X_t(\omega)$ is a right continuous map from $\bR^+:=[0,\infty)$ to $E$.  The following hypothesis,  due to \cite[(2.1)]{Sh88},  is essentially the first of Meyer's \emph{hypoth\`eses droites}.  

\begin{definition}[HD1]\label{DEFA2}
Let $E$ be a Radon topological space. 
A Markov transition function $(P_t)_{t\geq 0}$ on $(E, \cE^u)$ is said to satisfy (HD1),  if given an arbitrary probability law $\mu$ on $E$,  there exists an $E$-valued right continuous stochastic process $(X_t)_{t\geq 0}$ on some filtered probability space $(\Omega, \cG,\cG_t,  \bP)$ so that $X=(\Omega, \cG,\cG_t,\bP,  X_t)$ satisfies the Markov property \eqref{eq:A2} ($\cE^\bullet=\cE^u$) with transition semigroup $(P_t)_{t\geq 0}$  and initial law $\mu$. 
\end{definition}

In order to facilitate computations we shall work with a fixed collection of random variables $X_t$ defined on some probability space,  and a collection $\bP^x$ specified in such a way that $\bP^x(X_0=x)=1$, and under every $\bP^x$,  $X_t$ is Markov with semigroup $(P_t)$.  That is the following.

\begin{definition}\label{DEFA3}
Let $E$ be a Radon space and $(P_t)_{t\geq 0}$ be a Markov transition function satisfying (HD1).  The collection $X=(\Omega, \cG,\cG_t, X_t,\theta_t, \bP^x)$ is called a \emph{realization} of $(P_t)$ if it satisfies the following conditions:
\begin{itemize}
\item[(1)] $(\Omega, \cG,\cG_t)$ is a filtered measurable space,  and $X_t$ is an $E$-valued right continuous process $\cE^u$-adapted to $(\cG_t)$;
\item[(2)] $(\theta_t)_{t\geq 0}$ is a collection of shift operators mapping $\Omega$ into itself and satisfying for $t,s\geq 0$, $\theta_t\circ \theta_s=\theta_{t+s}$ and $X_t\circ \theta_s=X_{t+s}$;
\item[(3)] For every $x\in E$,  $\bP^x(X_0=x)=1$,  and the process $(X_t)_{t\geq 0}$ has the Markov property \eqref{eq:A2} with transition semigroup $(P_t)$ relative to $(\Omega,\cG,\cG_t, \bP^x)$.   
\end{itemize}
Furthermore,  a realization of $(P_t)$ is called \emph{canonical} if $\Omega$ is the space of all right continuous maps from $\bR^+$ to $E$,  $X_t(\omega):=\omega(t)$,  $\cG=\sigma\left\{f(X_t): f\in \cE^u,t\geq 0\right\}$ and $\cG_t=\sigma\left\{f(X_s): f\in \cE^u,0\leq s\leq t\right\}$.
\end{definition}
\begin{remark}\label{RMA4}
\begin{itemize}
\item[(1)] The normal property $\bP^x(X_0=x)=1$ is not always built into the definition of Markov property \eqref{eq:A2}.  Particularly,  it implies that the transition semigroup $(P_t)$ is normal. 
\item[(2)] Obviously $\theta_t\in \cF^u_{t+s}/\cF^u_s$ for any $s\geq 0$.  
\item[(3)] The existence of realization is equivalent to (HD1) for a normal transition function $(P_t)$.  One one hand,  such a (canonical) realization of a normal $(P_t)$ satisfying (HD1) always exists.   More precisely,  define $\theta_t\omega(s):=\omega(t+s)$ for $t,s\geq 0$ and $\omega\in \Omega$,  the space of all right continuous maps from $\bR^+$ to $E$.  For $x\in E$,  let $(\tilde\Omega,  \cG,\cG_t, \bP, \tilde X_t)$ be the collection in Definition~\ref{DEFA2} with $\mu=\delta_x$.   Define a map $\Phi: \tilde\Omega\rightarrow \Omega$,  $\tilde\omega\mapsto \Phi(\tilde\omega)$ with $\Phi(\tilde \omega)(t):=\tilde X_t(\tilde\omega)$,  as is characterized by the formulae $\tilde X_t=X_t\circ \Phi$,  $t\geq 0$.  Obviously $\Phi\in \cG/\cF^u$.  Let $\bP^x$ be the image measure of $\bP$ under the map $\Phi$,  so that $X=(\Omega, \cF^u,\cF^u_t, X_t,\theta_t, \bP^x)$ is the canonical realization of $(P_t)$. 
To the contrary,  let $X=(\Omega, \cG,\cG_t, X_t,\theta_t, \bP^x)$ be a realization of $(P_t)_{t\geq 0}$.  Note that $x\mapsto \bP^x (B)$ is $\cE^u$-measurable for every $B\in \cF^u$;  see \cite[(2.6)]{Sh88}.  Hence $\bP^\mu(\cdot):=\int_E \mu(dx)\bP^x(\cdot)$ defines a probability measure on $(\Omega,\cF^u)$ for any probability measure $\mu$ on $E$,  and $(X_t)$ satisfies the Markov property relative to $(\Omega, \cF^u,\cF^u_t,\bP^\mu)$,  with transition semigroup $(P_t)$ and initial law $\mu$.  In other words,  $(P_t)$ satisfies (HD1).  
\end{itemize}
\end{remark}

\subsection{Augmented filtration}\label{APPA5}

Now we introduce the notation of augmentation of the natural filtration $\cF^u_t$ (not necessarily on the space of right continuous maps).  The augmentation of general filtration is analogous.  

Given an initial law $\mu$ on $E$,  let $\cF^\mu$ denote the completion $\cF^u$ relative to $\bP^\mu$,  and let $\cN^\mu$ denote the $\sigma$-ideal of $\bP^\mu$-null sets in $\cF^\mu$.  Define $\cF:=\cap_\mu \cF^\mu$, where $\mu$ runs over all initial laws on $E$,  $\cN:=\cap_\mu \cN^\mu$,  $\cF^\mu_t:=\cF^u_t \vee \cN^\mu$ and  $\cF_t:=\cap_\mu \cF^\mu_t$.
Two random variables $G,H\in \cF$ are called \emph{a.s.  equal} if $\{G\neq H\}\in \cN$.  The filtration $(\cF_t)$ is called the \emph{augmented natural filtration} on $\Omega$.  

Let $X=(\Omega,\cG,\cG_t,X_t,\theta_t, \bP^x)$ be a realization of $(P_t)$ in Definition~\ref{DEFA3}.  Then $\theta_t\in \cF_{t+s}/\cF_{s}$ for any $s\geq 0$,  and $(\Omega, \cF,\cF_t,X_t,\theta_t,\bP^x)$ is also a realization of $(P_t)$.  

\subsection{The second fundamental hypothesis}

We assume throughout this subsection that $X=(\Omega,\cG,\cG_t,  X_t, \theta_t, \bP^x)$ is a right continuous Markov process with transition semigroup $(P_t)$ on a Radon space $E$.  

The resolvent $(U^\alpha)_{\alpha\geq 0}$ is the family of kernels on $(E,\cE^u)$ defined by
\[
	U^\alpha f(x):=\bP^x \int_0^\infty e^{-\alpha t}f(X_t)dt,\quad \alpha\geq 0,  f\in p \cE^u.
\]
%Write $U$ for $U^0$,  which is simply called the \emph{potential operator}.  
It satisfies the well-known resolvent equation: For $0\leq \alpha\leq \beta$ and $f\in p\cE^u$,
\[
	U^\alpha f=U^\beta f+(\beta-\alpha) U^\alpha U^\beta f.  
\]
The family of $\alpha$-excessive functions is crucial in the theory of Markov processes.

\begin{definition}\label{DEFA5}
Let $\alpha\geq 0$ and $f\in p\cE^u$.  Then $f$ is \emph{$\alpha$-super-mean-valued} in case $e^{-\alpha t}P_t f\leq f$ for all $t\geq 0$,  and $f$ is \emph{$\alpha$-excessive} if in addition,  $e^{-\alpha t}P_t f\rightarrow f$ as $t\downarrow 0$.  It is called simply excessive if $f$ is $0$-excessive.  The classes of $\alpha$-excessive,  excessive functions are denoted by $\cS^\alpha$,  $\cS$ respectively.  
\end{definition}
\begin{remark}\label{RMA6}
A function $f\in p\cE^u$ is called \emph{$\alpha$-supermedian} in case $\beta U^{\alpha+\beta} f\leq f$ for all $\beta>0$.  Note that $\alpha$-super-mean-valued functions are $\alpha$-supermedian,  but not vice versa.  In addition,  $\cS^\alpha=\{f \text{ is }\alpha\text{-supermedian}: \lim_{\beta\uparrow \infty}\beta U^{\alpha+\beta}f=f\}$.  
\end{remark}

We take up now the second fundamental hypothesis,  which makes the strong Markov property available.

\begin{definition}[HD2]\label{DEFA7}
The Markov process $X=(\Omega,\cG,\cG_t,  X_t, \theta_t, \bP^x)$ with transition semigroup $(P_t)$ is said to satisfy (HD2),  if for every $\alpha>0$ and every $f\in \cS^\alpha$, the process $t\mapsto f(X_t)$ is a.s.  right process.
\end{definition}
\begin{remark}
In view of \eqref{APPA5},  ``\emph{a.s.}" means that there is $N\in \cN(\cG)$ such that $t\mapsto f(X_t)$ is right continuous on $\Omega\setminus N$,  where $\cN(\cG)$ is the intersection of all $\sigma$-ideal of $\bP^\mu$-null sets in the completion of $\cG$ relative to $\bP^\mu$.  Briefly speaking,  it says that for any $x\in E$,  $t\mapsto f(X_t)$ is $\bP^x$-a.s.  right continuous.  This hypothesis always implies the strong Markov property of $X$ in the sense of \cite[\S6]{Sh88}.  Particularly,  if $E$ is Lusin,  i.e.  it is homeomorphic to a Borel subset of a compact metric space,  and $P_t(b\cE)\subset b\cE$,  then (HD2) is equivalent to the strong Markov property of $X$. 
\end{remark}

\subsection{Right processes and right semigroups}

Now we have a position to raise the formal definition of right processes.  

\begin{definition}\label{DEFA9}
A system $X=(\Omega, \cG,\cG_t,X_t,\theta_t,\bP^x)$ is a \emph{right process} on the Radon space $E$ with transition semigroup $(P_t)$ provided:
\begin{itemize}
\item[(i)] $X$ is a realization of $(P_t)$;
\item[(ii)] $X$ satisfies (HD2);
\item[(iii)] $(\cG_t)$ is augmented and right continuous.  
\end{itemize}
If there is some right process with transition semigroup $(P_t)$,  then $(P_t)$ is called a \emph{right semigroup}.  
\end{definition}
\begin{remark}
\begin{itemize}
\item[(1)] When $E$ is a Lusin topological space and $P_t(b\cE)\subset b\cE$,  $X$ is called a Borel right process.  
\end{itemize}
\item[(2)] If $X$ is a right process,  then the augmented natural filtration $\cF_t$ is right continuous and $(\Omega, \cF,\cF_t, X_t, \theta_t,\bP^x)$ is also a right process with transition semigroup $(P_t)$.  Particularly,   if $(P_t)$ is a right semigroup,  then its \emph{augmented canonical realization}, obtained by replacing the natural filtration in the canonical realization with the augmented one,  is a right process with transition semigroup $(P_t)$. 
\end{remark}

Let $X$ be a right process on $E$ with right semigroup $(P_t)$.  
The fine topology (see \cite[\S10]{Sh88}) is the coarsest topology on $E$ making all functions in $\cup_{\alpha> 0}\cS^\alpha$ continuous.  It is finer than the original topology of $E$. The Borel $\sigma$-algebra relative to the fine topology on $E$ is denoted by $\cE^e$.  Actually $\cE\subset \cE^e=\sigma\{\cup_{\alpha>0} \cS^\alpha\}$ and $P_t(b\cE^e)\subset b\cE^e$.  For any $B\in \cE^e$ (more generally,  if $B$ is nearly optional in the sense of \cite[(5.1)]{Sh88}),  the first hitting time $T_B:=\inf\{t>0: X_t\in B\}$ is an $\cF_{t}$-stopping time,  i.e.  $\{T_B\leq t\}\in \cF_t$ for any $t\geq 0$.  

\subsection{Lifetime}\label{APPA8}
If a transition function $(P_t)$ is only sub-Markovian,  it may be extended to a Markov one $(\tilde{P}_t)$ on a larger space $E_\Delta$ by a standard argument as in \cite[(11.1)]{Sh88}. (Take an abstract point $\Delta$ not in $E$ and let $E_\Delta:=E\cup \{\Delta\}$ be the Radon space obtained by adjoining $\Delta$ to $E$ as an isolated point.) In this case we call $(P_t)$ a right semigroup if $(\tilde{P}_t)$ is a right semigroup.  

Realizing the right semigroup $(\tilde{P}_t)$ as a right process $(\Omega, \tilde{\cG},\tilde{\cG}_t,\tilde{X}_t, \tilde{\theta}_t,\tilde{\bP}^x)$ on $E_\Delta$,  one gets  $\tilde{\bP}^\Delta(\tilde{X}_t=\Delta, \forall t\geq 0)=1$.  Let $\zeta:=\{t>0:\tilde{X}_t=\Delta\}$. By the strong Markov property,  $\tilde{X}_t=\Delta$ for all $t>\zeta$,  almost surely.  Hence $\Delta$ is usually called the \emph{ceremony} for the process and $\zeta$ is called the \emph{lifetime}.  The role played by $\tilde{P}_t$ is de-emphasized by making the convention that every function on $E$ is automatically extended to $E_\Delta$ by setting $f(\Delta):=0$.  Defining $X_t$ to be the same as $\tilde{X}_t$ on $\Omega$ and letting $\bP^x:=\tilde{\bP}^x$ for $x\in E$,   we can obtain another collection $(\Omega, \cG,\cG_t,X_t,\theta_t, \bP^x)$ on $E$ in certain standard manner,  which is called the right process on $E$ with lifetime $\zeta$ and transition semigroup $(P_t)$.  More details are referred to in \cite[\S11]{Sh88}.

%For every $H\in b\cF$,  $x\mapsto \bP^x H$ is in $b\cE^u$ and $\bP^\mu H=\int_E \mu(dx)\bP^xH$ for any initial law $\mu$ on $E$.  

\section{Ray-Knight compactification}\label{APPB}

Let $X=(\Omega, \cG,\cG_t,X_t,\theta_t,\bP^x)$ be a right process on a Radon space $E$ with right semigroup $(P_t)$.  The resolvent of $X$ is denoted by $(U^\alpha)_{\alpha\geq 0}$.  If $X$ has the ceremony,  it should be regarded as a point in $E$ throughout this section.  The following introduction to Ray-Knight compactification is due to \cite[\S9, \S17,  \S18,  \S39]{Sh88} and \cite{Ge75}. 

Let $\mathbb{Q}$ denote the set of rational numbers,  $\bQ^+$ the positive rational numbers and $\bQ^{++}$ the strictly positive rational numbers.  The $\bQ^+$-cone generated by a family $\cY$ of positive and bounded functions on $E$ is the set of all $\bQ^+$-linear combinations of functions in the class $\cY$.  Given a $\bQ^+$-cone $\cY\subset bp\cE^u$,  set 
\[
\begin{aligned}
 &\bigwedge(\cY):=\{k_1\wedge \cdots \wedge k_n: n\geq 1, k_1,\cdots,  k_n\in \cY\}, \\
 &\cU(\cY):=\{U^{\alpha_1}k_1+\cdots+U^{\alpha_n}k_n: n\geq 1,\alpha_i\in \bQ^{++},k_i\in\cY\}.
 \end{aligned}\]
Both operations of $\bigwedge$ and $\cU$ keep the property of $\bQ^+$-cone. 

Recall that $C_d(E)$ is the family of all $d$-uniformly continuous functions on $E$.  For convenience,  we propose to assign a name to the following class of functions.

\begin{definition}\label{DEFB1}
A family $\cC$ is called a \emph{pre-Ray class},  if
\begin{itemize}
\item[(i)] $\cC\subset p C_d(E)$ is countable;
\item[(ii)] $1_E\in \cC$;
\item[(iii)] The linear span of $\cC$ is uniformly dense in $C_d(E)$.  
\end{itemize}
\end{definition}

Since $C_d(E)$ is separable,  such $\cC$ always exists.  The \emph{rational Ray cone} $\cR$ generated by $(U^\alpha)$ and $\cC$ is the $\bQ^+$-cone defined as follows: Let $\cH$ denote the $\bQ^+$-cone generated by $\cC$, and let $\cR_0:=\mathcal{U}(\cH)$.  For $n\geq 1$,  let $\cR_n:=\bigwedge(\cR_{n-1}+\cU(\cR_{n-1}))$,  and finally set $\cR:=\cup_{n\geq 0}\cR_n$.  Obviously $\cR\subset \cup_{\alpha>0} b \cS^\alpha$,  and $\cR$ is countable,  stable under the operation $\bigwedge$,  contains the positive rational constant functions, and separates the points of $E$.  

Write $\cR=\{g_n:n\geq 1\}$.  Define a metric $\rho$ on $E$ as
\[
	\rho(x,y):=\sum_{n\geq 1}2^{-n}\|g_n\|^{-1}|g_n(x)-g_n(y)|,
\]
where $\|g_n\|:=\sup_{x\in E}|g_n(x)|$.  The map 
\[
	\Psi: E\rightarrow K:=\prod_{n=1}^\infty [0, \|g_n\|],  \quad x\mapsto (g_n(x))_{n\geq 1}
\]
is an injection.  Since the product topology on $K$ is generated by the metric $$\rho'(a,b):=\sum_{n\geq 1}2^{-n}\|g_n\|^{-1}|a_n-b_n|$$ for $a=(a_n)_{n\geq 1}$ and $b=(b_n)_{n\geq 1}$,  $\Psi$ is an isometry of $(E,\rho)$ to $(K,\rho')$.  It follows that the completion $(\bar{E},\bar{\rho})$ of $(E,\rho)$ is compact.  In general $\Psi$ is only $\cE^u$-measurable and $\cE^u=\mathcal{B}^u(\bar{E})|_E$.  If $X$ is a Borel right process,  then $\Psi$ is $\cE$-measurable. 

The topology on $E$ induced by the metric $\rho$ is called the \emph{Ray topology} on $E$.  Actually it does not depend on the choice of $d$ or $\cC$,  and in general,  is not compatible to the original topology on $E$.  Denote by $C_\rho(E)$ the space of $\rho$-uniformly continuous functions on $E$.  Then for all $\alpha>0$,  $U^\alpha(C_\rho(E))\subset C_\rho(E)$,  $U^\alpha(C_d(E))\subset C_\rho(E)$ and $\cR-\cR$ is uniformly dense in $C_\rho(E)$; see \cite[(17.8)]{Sh88}. 
 Let $\cE^r:=\sigma\{C_\rho(E)\}$,  i.e.  the $\sigma$-algebra on $E$ generated by the Ray topology.  Then $\cE\subset \cE^r\subset \cE^u$ and $P_t(b\cE^r)\subset b\cE^r$,  $U^\alpha(b\cE^r)\subset b\cE^r$ for $\alpha>0$.  

We may now construct by continuity a resolvent $\bar{U}^\alpha$ on $\bar{E}$ that extends $U^\alpha$ on $E$.  Let $\bar{f}\in C(\bar{E})$ be the continuous extension of $f\in C_\rho(E)$.  Then $U^\alpha f\in C_\rho(E)$ and so $U^\alpha f$ extends continuously to $\overline{U^\alpha f}\in C(\bar{E})$.  Define the map $\bar{U}^\alpha: C(\bar{E})\rightarrow C(\bar{E})$ as
\[
	\bar{U}^\alpha \bar{f}:=\overline{U^\alpha f},\quad \bar{f}\in C(\bar{E}).  
\]
Then $(\bar{U}^\alpha)_{\alpha>0}$ is a so-called \emph{Ray resolvent} on $C(\bar{E})$ in the sense of,  e.g.,  \cite[(9.4)]{Sh88}.  The collection $(\bar{E},\bar{\rho},\bar{U}^\alpha)$ is called the \emph{Ray-Knight compactification} (or \emph{Ray-Knight completion}) of $(E,d,U^\alpha)$.  It depends not only on $E$,  $d$ and $U^\alpha$ but also on the choice of $\cC$.  
For every $x\in E$,  $\bar{U}^\alpha(x,\cdot)$ is carried by $E\in \mathcal{B}^u(\bar{E})$ and its restriction to $E$ is equal to $U^\alpha(x,\cdot)$.  Let $\bar P_t$ be the \emph{Ray semigroup} associated with $\bar{U}^\alpha$ on $\bar{E}$.  Then for all $x\in E$ and $t\geq 0$,  $\bar{P}_t(x,\cdot)$ is also carried by $E$ and its restriction to $E$ is equal to $P_t(x,\cdot)$.  

The \emph{Ray process} $\bar{X}$ associated with $\bar{U}^\alpha$ on $\bar{E}$ admits branching points $B:=\{x\in \bar E: \bar{P}_0(x,\cdot)\neq \delta_x\}$ but may lead to certain right processes by restriction.  Note that $B\in \mathcal{B}(\bar{E})$ and the set of non-branching points $D:=\bar{E}\setminus B$ always contains $E$.  The (first) restriction of $\bar{X}$ to $D$ ($\supset E$) is a Borel right process.  On the other hand,  put
\[
	E_R:=\{x\in \bar{E}: \bar{U}^\alpha(x,\cdot)\text{ is carried by }E\},
\]
which inherits the subspace topology from $\bar{E}$.  Then $E_R$ is  independent of $\alpha$, $E\subset E_R$, and $E_R$ is  a Radon topological space.  This space,  called the \emph{Ray space},  is also independent of $d$ and $\cC$,  and depends only on $U^\alpha$ and the original topology on $E$.  Furthermore,  the (second) restriction of $\bar{X}$ to $E_D:=E_R\cap D$ ($\supset E$) is also a (not necessarily Borel) right process.

%In addition,  $\cR$ is the smallest $\bQ$-cone contained in $b\cE^u$ such that $U^\alpha(\cR)\subset \cR$ if $\alpha\in \bQ^{++}$,  $f,g\in \cR$ implies $f\wedge g\in \cR$ and $\cR$ contains $\cU(\cH)$;  see \cite[(17.4)]{Sh88}.  

%\section*{Acknowledgement}

%\bibliographystyle{amsplain}
\bibliographystyle{alpha} % We choose the "plain" reference style
\bibliography{ResRight}

\end{document}